\documentclass[11pt]{article} 

\usepackage{geometry} 
\geometry{a4paper} 

\usepackage{mathrsfs} 
\usepackage{amsthm} 
\usepackage{amsmath} 
\usepackage{amsfonts}

\usepackage{nicefrac}

\usepackage[pdftex]{graphicx}
\usepackage{color}

\allowdisplaybreaks

\numberwithin{equation}{section}

\newcommand{\NN}{\mathbb{N}}
\newcommand{\ZZ}{\mathbb{Z}}
\newcommand{\RR}{\mathbb{R}}
\newcommand{\CC}{\mathbb{C}}
\newcommand{\ZZp}{\ZZ[i]^+}
\newcommand{\FQ}{\mathcal{F}_Q}
\newcommand{\FIQ}{\mathcal{F}_{I,Q}}
\newcommand{\GS}{\mathcal{G}_S}
\newcommand*\diff{\mathop{}\!\mathrm{d}}

\newcommand{\bigoh}{\mathcal{O}}
\newcommand{\bgoh}[1]{\bigoh\left( {#1} \right)}

\newcommand{\RE}[1]{\operatorname{Re} \left({#1}\right)}
\newcommand{\IM}[1]{\operatorname{Im} \left({#1}\right)}

\newcommand{\abs}[1]{\lvert#1\rvert}

\newcommand{\ifotherdef}[4]
{
	\left\{
		\begin{array}{ll}
			#1 & \mbox{if } #2 \\
			#3 & \mbox{otherwise. } #4
		\end{array}
	\right.
}

\makeatletter
\def\blfootnote{\xdef\@thefnmark{}\@footnotetext}
\makeatother

\newtheorem{defn}{Definition}[section]
\newtheorem{prop}{Proposition}[section]
\newtheorem{thm}{Theorem}[section]
\newtheorem{lem}{Lemma}[section]

\newtheorem{fact}{Fact}[section]

\title{First Moment of Distances Between Centres of Ford Spheres}

\date{May 2018}

\author{Kayleigh Measures    \\ {\small\sc (York) }    }

\begin{document}

\maketitle

\maketitle

\begin{abstract}
This paper aims to develop the theory of Ford spheres in line with the current theory for Ford circles laid out in a recent paper by S. Chaubey, A. Malik and A. Zaharescu.  As a first step towards this goal, we establish an asymptotic estimate for the first moment  \begin{equation*}\centering \mathcal{M}_{1,I_2} (S) = \sum\limits_{\substack{\frac{r}{s},\frac{r'}{s'}\in \GS \\ consec}} \frac{1}{2\abs{s}^2} + \frac{1}{2\abs{s'}^2}  \, , \end{equation*}
where the sum is taken over pairs of fractions associated  with `consecutive' Ford spheres of radius less than or equal to $\frac{1}{2S^2}$.
\end{abstract}

\renewcommand{\baselinestretch}{1}
\parskip=1ex

\blfootnote{2010 \emph{Mathematics Subject Classification.} 11B57, 11N56.}
\blfootnote{\emph{Key words and phrases.} Ford spheres, Higher dimensional Farey Fractions, Gaussian integers.}

\section{Introduction and Motivation}

Ford spheres were first introduced by L. R. Ford in \cite{Ford} alongside their two dimensional analogues, Ford circles. For a Farey fraction $\frac{p}{q}$, its Ford circle is a circle in the upper half-plane of radius $\frac{1}{2q^2}$ which is tangent to the real line at $\frac{p}{q}$.  Similarly, for a fraction $\frac{r}{s}$ with $r$ and $s$ Gaussian integers, its Ford sphere is a sphere in the upper half-space of radius $\frac{1}{2\abs{s}^2}$ tangent to the complex plane at $\frac{r}{s}$.

This paper studies moments of distances between centres of Ford spheres, in line with the moment calculations for Ford circles produced by S. Chaubey, A. Malik and A. Zaharescu.  In \cite{Chau}, Chaubey et al. consider those Farey fractions in $\mathcal{F}_Q$ which lie in a fixed interval $I:=[\alpha,\beta] \subseteq [0,1]$ for rationals $\alpha$ and $\beta$.   They call the set of Ford circles corresponding to these fractions $\mathcal{F}_{I,Q}$, and its cardinality is denoted $N_I(Q)$.  The circles $C_{Q,j}$ ($1 \leq j \leq N_I(Q)$) in $\FIQ$ are ordered so that each circle is tangent to the next.  The center of the circle $C_{Q,j}$ is denoted by $O_{Q,j}$. They then consider the $k$-moments of the distances between the centres of consecutive circles for any positive integer $k$; namely the quantity
  \begin{equation*}
  \centering
    \mathcal{M}_{k,I}(Q) := \frac{1}{| I |} \sum_{j=1}^{N_I(Q)-1} (D( O_{Q,j}, O_{Q,j+1}))^k
  \end{equation*}
  where $D( O_{Q,j}, O_{Q,j+1})$ denotes the Euclidean distance between the centres  $O_{Q,j}$ and $O_{Q,j+1}$.
Finally, varying $Q$ and choosing a real variable $Y$ so that $Q=\lfloor Y \rfloor$, they consider the averages of these $k$-moments for all large $X$, defined by
\begin{equation*}
  \centering
    \mathcal{A}_{k,I}(X) := \frac{1}{X} \int_{X}^{2X} \mathcal{M}_{k,I}(Q)  \diff Y ,
\end{equation*}
which they find satisfy nice asymptotic formulas.  In particular, for a constant $B_1(I)$ depending only on the chosen interval, they show that
\begin{equation*}
  \mathcal{A}_{1,I}(X) = \dfrac{6}{\pi^2} \log (4X) + B_1(I) + O\left( \dfrac{\log X}{X} \right).
\end{equation*}

While the fundamental properties of Ford circles required for calculating these moments are well-established, surprisingly the same is not true for Ford spheres.  For example, the Farey fractions in the interval $[0,1]$ can be generated from 0 and 1 by taking mediants of consecutive fractions, but there is no similar established method for generating Gaussian rationals in the unit square of the complex plane starting from 0,1,$i$ and $1+i$.  Moreover, there is no existing notion of `consecutive' in the higher dimensional case.  Two Farey fractions are consecutive in $\FQ$ if, when the members of $\FQ$ are listed in increasing order of size, one immediately follows the other in the list.  So, for example, in $\mathcal{F}_3 = \{0,\frac{1}{3},\frac{1}{2},\frac{2}{3},1\}$, $\frac{1}{2}$ is consecutive to $\frac{1}{3}$. However, Gaussian rationals have no such natural ordering, so consecutivity for Ford spheres cannot be defined in the same way.  Instead we will examine consecutivity in the context of the Ford circles and use this to give meaning to `consecutive spheres' in Section \ref{sec:spheres}.  Further, in Section \ref{sec:gencff}, we describe a method for generating Gaussian rationals similar to that for Farey fractions using a variation on the mediant operation.

With the work of Chaubey et. al. and these new notions in mind,  it makes sense to define the $k^{th}$ moment for Ford spheres as the sum of the $k^{th}$ powers of the distances between the centres of consecutive spheres.  The definition in Section \ref{sec:spheres} will require consecutive spheres to be tangent, so the distance between their centres is given by the sum of their radii and the $k^{th}$ moment is thus defined as
\begin{equation}\label{eqn:moment1}
  \mathcal{M}_{k,I_2} (S) = \sum\limits_{\substack{\frac{r}{s},\frac{r'}{s'}\in \GS \\ consec}} \left( \frac{1}{2\abs{s}^2} + \frac{1}{2\abs{s'}^2} \right)^k
\end{equation}
where $I_2$ is the unit square in the upper right quadrant of the complex plane, and we sum over consecutive fractions lying in $I_2$.  As finding an asymptotic estimate for the first moment for Ford spheres is already sufficiently more difficult than it is for Ford circles, this paper will deal with the case $k=1$ and higher moments will be dealt with in a later paper.

In order to study the first moment for the Ford spheres, we first need to understand when two Gaussian integers appear as denominators of consecutive fractions.  We will also need to be able to count how many Gaussian integers $s'$ are denominators of fractions which are consecutive to fractions with a given denominator $s$.  Once this has been achieved, it will be shown that the first moment satisfies the following formula.

\begin{thm} \label{thm:mainthm}
For a given positive integer $S$ and $\mathcal{M}_{k,I_2} (S)$ as defined in \eqref{eqn:moment1} with $k=1$,
\begin{equation*}
  \mathcal{M}_{1,I_2}(S) = \pi\zeta_i^{-1}(2)(8C-1) S^2 + \bigoh_{\epsilon}(S^{1+\epsilon}).
\end{equation*}
where $C = - \int_{0}^{\frac{1}{\sqrt{2}}} \ln(\sqrt{2}u)(1-u^2)^{\frac{1}{2}} du $.
\end{thm}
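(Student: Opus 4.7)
The plan proceeds in four main steps.

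First, by the symmetry $s\leftrightarrow s'$ of the summand, the first moment can be rewritten as a sum over ordered consecutive pairs:
\begin{equation*}
\mathcal{M}_{1,I_2}(S) = \sum_{\frac{r}{s}\in \GS \cap I_2} \frac{M(r,s)}{2\abs{s}^2},
\end{equation*}
where $M(r,s)$ denotes the number of consecutive neighbours of the sphere at $\frac{r}{s}$. Two Ford spheres of radii $\frac{1}{2\abs{s}^2}$ and $\frac{1}{2\abs{s'}^2}$ are tangent exactly when $\abs{rs'-r's}=1$, i.e.\ $rs'-r's = u$ for some unit $u\in\{\pm 1,\pm i\}$, which also forces $\gcd(s,s')$ to be a unit. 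Consecutive neighbours, as defined in Section~\ref{sec:spheres}, are the tangent neighbours subject to the further geometric restriction established there.

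Next, for fixed $s$ with $\abs{s}\le S$, the equation $rs'-r's = u$ fixes $s'$ in a single residue class modulo $s$ and then determines $r'$. The conditions $\frac{r'}{s'}\in I_2$, $\abs{s'}\le S$, and the consecutivity restriction cut out a region of Gaussian integers $s'$ in each residue class. Approximating the lattice count by the corresponding area (with error $\bigoh(\abs{s}^{1+\epsilon})$ per residue class) and handling $\gcd(s,s')=1$ by M\"obius inversion on $\ZZ[i]$ converts $\sum_r M(r,s)$ into a continuous integral, with the factor $\zeta_i^{-1}(2)$ emerging naturally from the inversion.

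To evaluate the integral, substitute $w = -\tfrac{u}{ss'}$, which sends $\abs{s'}\le S$ to $\abs{w}\ge 1/(\abs{s}S)$, introduces Jacobian $\abs{w}^{-4}$, and turns $\frac{r'}{s'}\in I_2$ into $\frac{r}{s}+w\in I_2$. Integrating out $x=\frac{r}{s}\in I_2$ collapses the $x$-part to the area of $I_2\cap(I_2+w)$, namely $(1-\abs{\RE{w}})_+(1-\abs{\IM{w}})_+$. Passing to polar coordinates, the fourfold unit symmetry and the diagonal symmetries of $I_2$ combine to yield the factor $8$, and the remaining radial integral reduces after a trigonometric change of variables to $\int_0^{1/\sqrt{2}}\ln(\sqrt{2}\,u)(1-u^2)^{1/2}\,\diff u = -C$; the logarithm arises from the combination of the $\abs{w}^{-4}$ Jacobian with the $\abs{s}^{-2}$ weight in the outer sum. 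The $-1$ in $8C-1$ is a boundary correction accounting for the degenerate locus excluded by the lower cutoff $\abs{w}\ge 1/(\abs{s}S)$. Finally, the outer $s$-sum contributes the overall factor $\pi S^2$ via Gauss' circle problem, assembling the main term $\pi\zeta_i^{-1}(2)(8C-1)S^2$.

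The principal obstacle is a sharp uniform treatment of the consecutivity condition: mere tangency would give each fraction on the order of $\abs{s}^2$ partners and inflate the main term by a $\log S$ factor, so the restriction from Section~\ref{sec:spheres} must be translated into a concrete geometric criterion on the shift $w$ that respects both the inner polar-coordinate integration and the outer summation over $\abs{s}\le S$. Equally delicate is the error analysis: lattice-point counts and M\"obius truncations each contribute $\bigoh(\abs{s}^{1+\epsilon})$ per denominator, and summing these against $\abs{s}^{-2}$ over all $\abs{s}\le S$, together with boundary contributions near $\partial I_2$ and near the cutoff $\abs{w} = 1/(\abs{s}S)$, must collapse to $\bigoh_\epsilon(S^{1+\epsilon})$. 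Standard techniques such as Kloosterman-sum bounds or hyperbola-method manipulations should suffice, but the two-dimensional lattice geometry requires care.
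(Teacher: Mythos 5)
Your outline has two genuine gaps, one in the identification of the main term and one in the error analysis. First, you never make ``consecutive'' concrete, and your plan to encode it as a geometric condition on the shift $w=-u/(ss')$ cannot succeed: the classification actually needed (Lemma \ref{lem:consec}) is that $s,s'$ are consecutive denominators iff $\abs{s},\abs{s'}\le S$, $(s,s')=1$ and $\abs{s'+u's}>S$ for some unit $u'$ -- a condition depending on $s$ and $s'$ individually (their moduli and relative argument), whereas $w$ records only the product $ss'$, so after your substitution the decisive constraint is lost. Relatedly, the mechanism you propose for the constant is misattributed. The overlap area of $I_2\cap(I_2+w)$ is $1+\bigoh(\abs{w})$ with $\abs{w}=1/(\abs{s}\abs{s'})$, so integrating out the position of $r/s$ only produces lower-order boundary corrections; it cannot generate $C$. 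In the paper the constant arises in the denominator plane: for fixed $s$ the admissible $s'$ fill the region $\Omega$ inside $\abs{z}\le S$ and outside at least one circle of radius $S$ centred at $-u's$, whose area is $-2\abs{s}^2+8S^2\int_0^{\arcsin(\abs{s}/\sqrt{2}S)}\cos^2u\,\diff u$ (Proposition \ref{prop:areaomega}); the logarithm then comes from $\sum_{\abs{s}\le t}\phi_i(s)/\abs{s}^4\sim\tfrac{\pi}{2}\zeta_i^{-1}(2)\ln t$ via Abel summation, and the $-1$ in $8C-1$ comes from the $-2\abs{s}^2$ term paired with $\sum_{\abs{s}\le S}\phi_i(s)/\abs{s}^2\sim\tfrac{\pi}{4}\zeta_i^{-1}(2)S^2$, not from a cutoff correction at $\abs{w}=1/(\abs{s}S)$.

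Second, the error budget as you set it up does not close. Counting $s'$ one residue class modulo $s$ at a time in a region whose boundary has length $\asymp S$ gives an error $\bigoh(S/\abs{s}+1)$ per class (not $\bigoh(\abs{s}^{1+\epsilon})$ when $\abs{s}\le S^{1/2}$); summed over the $\asymp\phi_i(s)$ numerators $r$, the four units, and weighted by $1/(2\abs{s}^2)$, this is $\asymp S\sum_{\abs{s}\le S}\abs{s}^{-1}\asymp S^2$, i.e.\ the size of the main term. The paper avoids this by proving that each consecutive pair of denominators supports \emph{exactly} four pairs $(r,r')$ (the second half of Lemma \ref{lem:consec}), so the sum over numerators is exact and the only approximation is a single coprime lattice count per $s$ (Theorem \ref{thm:latticepointcount}), with error $\bigoh_{\epsilon}(\abs{\partial\Omega}\abs{s}^{\epsilon})$ that sums, against the weight $\abs{s}^{-2}$, to $\bigoh_{\epsilon}(S^{1+\epsilon})$; even there the factor $\abs{s}^{\epsilon}$ requires the bound $\sum_{d\mid s}\abs{\mu_i(d)}/\abs{d}\ll_{\epsilon}\abs{s}^{\epsilon}$, proved via the prime number theorem for Gaussian primes rather than Kloosterman-type input. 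Without the exact four-pair count and the coprime counting theorem, neither your main term nor your $\bigoh_{\epsilon}(S^{1+\epsilon})$ error can be justified.
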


To begin, Section 2 will cover relevant definitions and background material.  This will include results for Ford circles as well as Ford spheres, as understanding the properties of Ford circles will be necessary for formulating the equivalent properties of Ford spheres.  Section 3 contains the preliminary lemmas for Gaussian integers required in the proof of Theorem \ref{thm:mainthm}.  In Section 4 two lemmas necessary for calculating $\mathcal{M}_{k,I_2}(S)$ will be stated and proved. Finally, Theorem \ref{thm:mainthm} will then be proved in Section 5.

\section{Definitions}
In this section we review some relevant notions and facts about Farey fractions, Ford circles and Ford spheres.  In particular we see when two Farey fractions are called consecutive and then use this to give a suitable definition to the term for Ford spheres.

\subsection{Farey Fractions}

First we define $\mathcal{F}_Q$, the Farey fractions of order $Q$.
\begin{defn}For a positive integer $Q$,
\begin{equation*}
\centering
\mathcal{F}_Q = \{ \frac{p}{q} \in [0,1] : p,q \in \ZZ, (p,q)=1, q \leq Q \}.
\end{equation*}
\end{defn}
Here and in the following $(p,q)$ denotes the greatest common divisor of $p$ and $q$.  The fractions in this set are taken to be in order of increasing size.
\begin{defn}
Two rationals $\frac{a}{b} < \frac{c}{d}$ in $\mathcal{F}_Q$ are called adjacent if
\begin{equation}
bc-ad=1.
\end{equation}
They are consecutive in $\FQ$ if they are adjacent and $b+d>Q$.
\end{defn}

Note that this definition coincides with the usual meaning of consecutive, i.e. if $\frac{a}{b}$ and $\frac{c}{d}$ satisfy these conditions then $\frac{c}{d}$ will immediately follow $\frac{a}{b}$ in $\FQ$.  The Farey fractions can be constructed starting from 0 and 1 by taking mediants of consecutive Farey fractions.  This construction can be visualised in the left-hand side of the Stern-Brocot tree, shown in figure \ref{fig:sbtree}.  The lemma below follows from results in Chapter 3 of \cite{HardyWright} and says that all rational numbers will eventually be generated by this method.

\begin{lem}
Given coprime integers $0 \leq p < q$, $\frac{p}{q}$ occurs as a mediant of two fractions which are consecutive in $\mathcal{F}_{q-1}$.
\end{lem}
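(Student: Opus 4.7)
The plan is to construct the two consecutive Farey fractions in $\mathcal{F}_{q-1}$ explicitly using Bezout's identity and then verify the required properties. Since $(p,q)=1$, I would apply Bezout to find integers with $bp - aq = 1$, fixing the unique solution with $1 \le b \le q-1$, and setting $a := (bp-1)/q$. Defining $c := p - a$ and $d := q - b$, my candidates for the two consecutive fractions are $a/b$ and $c/d$.

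The crucial identity is
\begin{equation*}
bc - ad \;=\; b(p-a) - a(q-b) \;=\; bp - aq \;=\; 1,
\end{equation*}
which simultaneously yields adjacency of $a/b$ and $c/d$ in the sense of the definition above and, via standard divisibility arguments applied to $bp - aq = 1$ and $bc - ad = 1$, forces both $(a,b)=1$ and $(c,d)=1$, so the fractions are in lowest terms. The mediant identity $(a+c)/(b+d) = p/q$ is then immediate from the definitions of $c$ and $d$.

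The remaining work is bookkeeping on ranges. Consecutivity in $\mathcal{F}_{q-1}$ requires $b + d > q - 1$, which holds because $b + d = q$. Inclusion of both fractions in $\mathcal{F}_{q-1}$ needs $1 \le b, d \le q-1$ (immediate from the choice of representative for $b$ and $d = q - b$) together with $0 \le a \le b$ and $0 \le c \le d$ so that $a/b, c/d \in [0,1]$; these bounds follow from $1 \le p \le q-1$ by a short calculation (for instance, $a = (bp-1)/q \ge 0$ because $bp \ge 1$, and $a \le b$ because $bp - 1 \le bq$). The boundary case $p = 0$ forces $q = 1$ and is degenerate, so I would either exclude it or treat it as vacuous.

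I do not anticipate a genuine obstacle here: the construction is a direct application of Bezout, and the verifications are elementary. The only mild subtlety is choosing the Bezout representative of $b$ in $[1, q-1]$ precisely so as to ensure $a$ and $c$ are nonnegative; once that is in place, adjacency, coprimality, and the mediant relation all fall out of the single equation $bp - aq = 1$.
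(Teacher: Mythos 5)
Your construction is correct, and it is worth noting that the paper itself offers no proof of this lemma at all: it is simply quoted as a consequence of the material in Chapter 3 of Hardy and Wright (neighbouring fractions in $\mathcal{F}_n$, the mediant property, and the fact that a fraction with denominator $q$ first appears as the mediant of its two neighbours in $\mathcal{F}_{q-1}$). Your Bezout argument is essentially a self-contained, constructive version of that standard reasoning: choosing $b$ with $bp\equiv 1 \pmod q$, $1\le b\le q-1$ (possible since $q\nmid b$, as $bp\equiv 1$), and setting $a=(bp-1)/q$, $c=p-a$, $d=q-b$ produces exactly the two neighbours of $p/q$; the single relation $bc-ad=bp-aq=1$ gives adjacency with $a/b<c/d$, lowest terms for both fractions, and, with $b+d=q>q-1$, consecutivity in $\mathcal{F}_{q-1}$ in the paper's sense. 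The range checks you defer are indeed short: $0\le a\le b$ as you say, and for the upper fraction the cleanest route is $c=(pd+1)/q$, so $c\le d$ is equivalent to $d(q-p)\ge 1$, which holds since $d\ge 1$ and $p\le q-1$ (with equality only when $p=q-1$, $d=1$, giving $c/d=1\in[0,1]$). The degenerate case $p=0$, $q=1$ is rightly excluded. So the proposal fills a gap the paper leaves to the literature, at the cost of a little bookkeeping that the citation to Hardy and Wright would otherwise absorb.
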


\begin{figure}
    \centering
    \includegraphics[width=12cm]{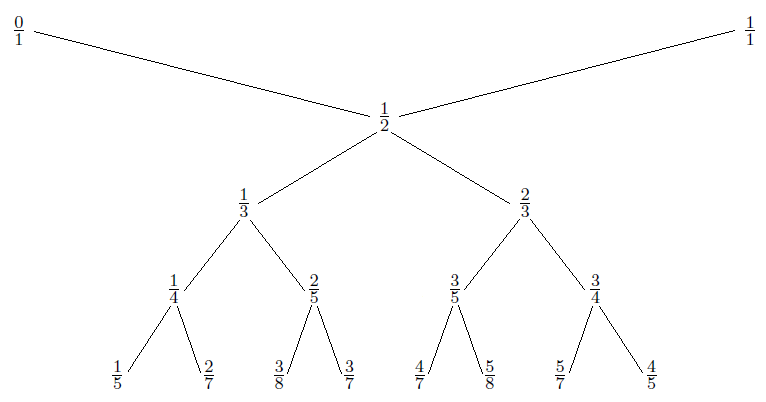}
    \caption{The Farey fractions as the left hand side of the Stern-Brocot tree.}
    \label{fig:sbtree}
\end{figure}

\subsection{Ford Circles}

Related to the Farey fractions are the Ford circles, which were introduced by L. R. Ford in \cite{Ford}.
\begin{defn}
For a Farey fraction $\frac{p}{q}$, its Ford circle is the circle in the upper half-plane of radius $\frac{1}{2q^2}$ which is tangent to the $x$-axis at $\frac{p}{q}$.
\end{defn}
This can be seen in Figure \ref{fig:fordcircles}.  Considering $\mathcal{F}_Q$ is equivalent to drawing the line $y=\frac{1}{2Q^2}$ and considering only rationals whose corresponding Ford circles have centres lying on or above this line.

\begin{figure}
    \centering
    \includegraphics[width=8cm]{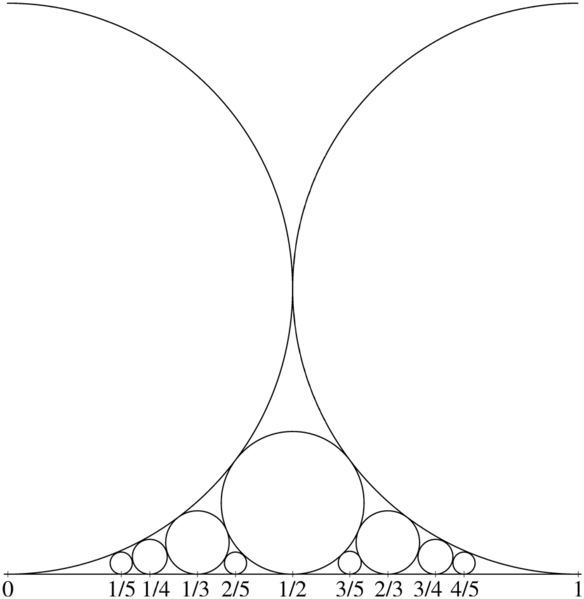}
    \caption{The Ford Circles in the interval [0,1].}
    \label{fig:fordcircles}
\end{figure}

When two Farey fractions of order $Q$ are adjacent their corresponding Ford circles will be tangent.  If the Farey fractions are consecutive, their Ford circles will be tangent and there will be no smaller circle between them for that order $Q$.  Following Chaubey et al.'s notation from \cite{Chau}, we denote the Ford circle corresponding to the $j$th member of $\mathcal{F}_Q$ by $C_{Q,j}$, and its centre by $O_{Q,j}$.  Since they are tangent, the distance between the centres of two consecutive Ford circles will be given by the sum of their radii,
\begin{equation}
\centering
  D(O_{Q,j} , O_{Q,j+1}) = \frac{1}{2q_j^2} + \frac{1}{2q_{j+1}^2}.
\end{equation}

\subsection{Ford Spheres} \label{sec:spheres}

In \cite{Ford}, Ford introduces a complex analogue of his circles, which we call Ford spheres.  Where the Ford circles are related to the Farey fractions, which lie in $\mathbb{Q}$, the Ford spheres are based on a complex analogue of these fractions, which take values in $\mathbb{Q}[i]$.  Then, in place of $\FQ$ we define$\GS$ as follows.
\begin{defn}\label{def:GS}  Let $S$ be a positive integer and let $u$ be a unit in $\ZZ[i]$,
\begin{equation*}
\centering
\GS := \left\{ \tfrac{r}{s} \in I_2 : r,s \in \mathbb{Z}[i], (r,s) = 1, |s| \leq S\right\}
\end{equation*}
where $I_2$ is the unit square in the upper right quadrant of the complex plane.
\end{defn}
In place of a circle of radius $\frac{1}{2q^2}$ placed on the $x$-axis at $\frac{p}{q} \in \mathcal{F}_Q$ , Ford considers a sphere of radius $\frac{1}{2|s|^2}$ in the upper half-space $\mathbb{C} \times \mathbb{R}^+$, touching the complex plane at $\frac{r}{s} \in \mathcal{G}_S$. Similarly to the Ford circles, two spheres with corresponding fractions $\frac{r}{s}$ and $\frac{r'}{s'}$ are either tangent or disjoint.
\begin{defn}
Two fractions $\frac{r}{s}$ and $\frac{r'}{s'}$ in $\GS$ are called adjacent if their spheres are tangent, i.e. if
\begin{equation}
|r's-rs'| = 1.
\end{equation}
\end{defn}
Ford states that every fraction $\frac{r}{s}$ has an adjacent fraction $\frac{r'}{s'}$, and that any fraction of the form
  \begin{equation*}
  \centering
    \frac{r'_n}{s'_n} = \frac{r'+nr}{s'+ns}
  \end{equation*}
will also be adjacent to $\frac{r}{s}$, where $n$ is any Gaussian integer.  Ford also notes that of those spheres which are tangent to the sphere at $\frac{r}{s}$, two, three or four of them will be larger than that sphere.

In $\mathcal{M}_{k,I}(Q)$ the sum is taken over consecutive circles, so in order to consider analogous moments for Ford spheres, we will first define what it means for two fractions to be consecutive in $\GS$.  In $\FQ$ this meant that one fraction immediately followed the other when listed according to size, but this is not helpful for $\GS$ as the Gaussian rationals have no such natural ordering.  Alternatively, consecutivity for fractions in $\FQ$ is equivalent to the fractions being adjacent and the sum of their denominators being greater than $Q$.  If we try this approach for $\GS$, the question becomes, how should we ``add'' the denominators and then compare the result to $S$?.  Instead, we look at the definition in terms of Ford circles.  It is easy to see that if two Ford circles are consecutive in $\FQ$ then they are tangent and there is no smaller circle between them.  We use this idea to give an equivalent definition in terms of the Ford spheres.

\begin{defn}
Let $\frac{r}{s}$ and $\frac{r'}{s'}$ be fractions in $\GS$ with spheres $R$ and $R'$ respectively.  They are consecutive in $\GS$ if they are adjacent and there is at least one other fraction in $\mathbb{Q}[i]$ with sphere of radius less than $\frac{1}{2S^2}$ which is tangent to both $R$ and $R'$.
\end{defn}

Note that having a sphere of radius less than $\frac{1}{2S^2}$ means that the fraction will not be in $\GS$ itself.  The structure of the spheres means that for any two given tangent spheres there will be multiple smaller spheres which are tangent to both (unlike Ford circles where there is only one such circle).  The definition above ensures that two spheres are considered consecutive until all of those smaller spheres are in $\GS$; this will be necessary for constructing $\GS$ later (Section \ref{sec:gencff}).

\section{Preliminary Lemmas - Gaussian Integers} \label{sec:prelimGI}

In this section we lay out some notation and results for Gaussian integers that will be needed later.  Although many of these are analogues of well known facts for arithmetical functions, for completeness their proofs are also included.

\subsection{Notation}

Any Gaussian integer $q$ can be written uniquely in the form
\begin{equation}\label{eqn:gint}
  \centering
  q=u \cdot p_1^{\alpha_1}p_2^{\alpha_2}\ldots p_k^{\alpha_k}
\end{equation}
where $u \in \{ \pm 1, \pm i \}$, $\alpha_i \geq 1$, and the $p_i$ are Gaussian primes such that $p_i \neq p_j$ when $i \neq j$, $\RE{p_i} > 0$ and $\IM{p_i} \geq 0$.  We denote the set of such $q$ for which $\RE{q}>0$ and $\IM{q}\geq 0$ by $\ZZ[i]^+$.  In the following, for all $q \in \ZZ[i]$, $\sum_{d \mid q}$ denotes a sum over $d\in \ZZp$ which divide $q$.  We define complex Mobius and Euler-phi functions as follows.
\begin{defn} For a Gausian integer $q$ as in \eqref{eqn:gint}, define $\mu_i : \ZZp \rightarrow \ZZ$ by
\begin{equation*}
  \mu_i(q) :=
    \left\{
	\begin{array}{ll}
        1 & \mbox{if } q=u, \\
		(-1)^k  & \mbox{if } \alpha_1 = \cdots = \alpha_k = 1, \\
		0 & \mbox{otherwise. }
	\end{array}
\right.
\end{equation*}
\end{defn}

\begin{defn} For a Gaussian integer $q$ as in \eqref{eqn:gint}, define $\phi_i : \ZZp \rightarrow \ZZ$ by
\begin{equation*}
  \phi_i(q) := \left\lvert \left( \ZZ[i] / q\ZZ[i] \right)^*\right\rvert .
\end{equation*}
\end{defn}

For a function $f:\ZZp \rightarrow \CC$, we also make the following definition,

\begin{defn}
$f$ is called multiplicative if
\begin{equation}\label{eq:mult}
f(qr) = f(q)f(r)
\end{equation}
is true whenever $(q,r) = 1$.  $f$ is called completely multiplicative if \eqref{eq:mult} holds for all $q, r \in \ZZp$.
\end{defn}

\subsection{Elementary Lemmas}

\begin{lem} For $q$ as in \eqref{eqn:gint} we have
\begin{equation*} \sum_{d|q} \mu_i(d) =
  \ifotherdef{1}{q=u,}{0}{}
  \end{equation*}
\end{lem}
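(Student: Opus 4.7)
The plan is to mirror the classical proof of the Möbius identity $\sum_{d\mid n} \mu(d) = [n=1]$ in $\ZZ$, adapting it for the convention that $\sum_{d\mid q}$ runs only over divisors in $\ZZp$ so that associates are not double-counted. First I would dispose of the trivial case: if $q=u$ is a unit, the only divisor $d \in \ZZp$ of $q$ is $d=1$ (because among the units $\pm 1, \pm i$, only $1$ lies in $\ZZp$), so the sum collapses to $\mu_i(1) = 1$.

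For the main case, assume $q = u \cdot p_1^{\alpha_1} \cdots p_k^{\alpha_k}$ with $k \geq 1$. By the definition of $\mu_i$, any divisor $d \in \ZZp$ contributing a nonzero term must be squarefree in the primes $p_i$, hence $d$ is (up to unit, which is fixed by the $\ZZp$ convention) a product $p_{i_1}\cdots p_{i_j}$ for some subset $\{i_1,\ldots,i_j\} \subseteq \{1,\ldots,k\}$, including the empty product which corresponds to $d=1$. Grouping by the size $j$ of the subset and using $\mu_i(p_{i_1}\cdots p_{i_j}) = (-1)^j$, I obtain
\begin{equation*}
\sum_{d\mid q} \mu_i(d) = \sum_{j=0}^{k} \binom{k}{j}(-1)^j = (1-1)^k = 0.
\end{equation*}

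The only real subtlety, and what I would flag as the main thing to get right, is the bookkeeping imposed by restricting divisors to $\ZZp$: one needs to be sure that each squarefree product of distinct $p_i$ (which all already lie in $\ZZp$ by the normalisation in \eqref{eqn:gint}) corresponds to exactly one $d \in \ZZp$, with no phantom contribution from associates. Because the $p_i$ are chosen in $\ZZp$ and their products of distinct factors remain in $\ZZp$, this matches the counting above. Beyond this, no deeper input is required, and the argument terminates with the binomial identity above.
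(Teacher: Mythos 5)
Your proof is correct and follows essentially the same route as the paper: restrict to squarefree divisors of $p_1\cdots p_k$, group them by subset size, and conclude with the binomial identity $(1-1)^k=0$. Your extra care with the unit case and the $\ZZ[i]^+$ normalisation (so associates are not double-counted) is sound and only makes explicit what the paper treats as clear.
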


\begin{proof}
Clearly this is true when $q=u$ so suppose $q \neq u$.  Then we have
\begin{align*}
        \sum\limits_{d|q} \mu_i(d) &= \sum\limits_{d \mid p_1 \ldots p_k} \mu_i(d) \\[2ex]
          &= 1 - \sum\limits_{p_i} 1 + \sum\limits_{\substack{p_i,p_j \\ i\neq j}} 1 - \cdots \\[2ex]          &= (1-1)^k \\
          &= 0.
\end{align*}
\end{proof}

\begin{lem}
Two functions $f,g : \ZZp \rightarrow \CC$ satisfy
\begin{equation}\label{eq:mobiussum1}
  \centering
        f(q) = \sum_{d \mid q} g(d) \text{,    }
\end{equation}
for all $q \in \ZZp$ if and only if they satisfy
\begin{align}\label{eq:mobiussum2}
        g(q) &= \sum\limits_{d \mid q} \mu_i(d)f(\frac{q}{d}) \nonumber \\
        &= \sum\limits_{d \mid q} \mu_i(\frac{q}{d})f(d) \text{,    }
\end{align}
for all $q \in \ZZp$.
\end{lem}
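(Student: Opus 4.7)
The plan is to mimic the standard Möbius inversion argument, checking that the manipulation of divisor sums works in $\ZZp$. Two preliminary observations will keep the proof clean. First, the map $d \mapsto q/d$ is a bijection from the divisors of $q$ in $\ZZp$ onto themselves (because, writing $q = u p_1^{\alpha_1}\cdots p_k^{\alpha_k}$, the divisors in $\ZZp$ are exactly the products $p_1^{\beta_1}\cdots p_k^{\beta_k}$ with $0 \le \beta_i \le \alpha_i$, and this map just sends $(\beta_1,\ldots,\beta_k)$ to $(\alpha_1 - \beta_1,\ldots,\alpha_k - \beta_k)$). This immediately gives the equality of the two expressions in \eqref{eq:mobiussum2}. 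Second, for any fixed $q \in \ZZp$ and any function $h$ on pairs, we have the swap
\begin{equation*}
  \sum_{d \mid q}\,\sum_{e \mid (q/d)} h(d,e) \;=\; \sum_{e \mid q}\,\sum_{d \mid (q/e)} h(d,e),
\end{equation*}
since both sides range over pairs $(d,e) \in \ZZp \times \ZZp$ with $de \mid q$.

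For the forward direction, I would assume \eqref{eq:mobiussum1} and substitute into the first form of \eqref{eq:mobiussum2}:
\begin{equation*}
  \sum_{d \mid q} \mu_i(d) f(q/d) = \sum_{d \mid q} \mu_i(d) \sum_{e \mid (q/d)} g(e) = \sum_{e \mid q} g(e) \sum_{d \mid (q/e)} \mu_i(d).
\end{equation*}
By the previous lemma the inner sum is $1$ if $q/e = u$ (i.e.\ $e$ is an associate of $q$, and in $\ZZp$ this forces $e=q$) and $0$ otherwise, so only the term $e = q$ survives and the right-hand side equals $g(q)$.

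For the reverse direction I would argue symmetrically: assume \eqref{eq:mobiussum2} and plug it into $\sum_{d \mid q} g(d)$, obtaining
\begin{equation*}
  \sum_{d \mid q} g(d) = \sum_{d \mid q} \sum_{e \mid d} \mu_i(e) f(d/e) = \sum_{e \mid q} \mu_i(e) \sum_{d' \mid (q/e)} f(d'),
\end{equation*}
after the change of variable $d = ed'$ and the swap above. Applying the previous lemma to $\sum_{e \mid q/d'} \mu_i(e)$ (after re-swapping) collapses the sum to $f(q)$, which gives \eqref{eq:mobiussum1}. The only real obstacle is bookkeeping: confirming that restricting divisors to $\ZZp$ (rather than all of $\ZZ[i]$) still yields a well-behaved divisor lattice on which these swaps and the previous $\mu_i$-identity hold; the unique factorisation of Gaussian integers up to units, together with the convention fixing $\RE{p_i}>0$ and $\IM{p_i}\ge 0$, ensures this is the case.
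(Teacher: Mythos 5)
Your argument is correct and is essentially the paper's proof: substitute the divisor-sum expression, interchange the order of summation over pairs $(d,e)$ with $de \mid q$, and collapse the inner sum using the preceding lemma on $\sum_{d \mid q}\mu_i(d)$. The only differences are cosmetic — you work with the $\mu_i(d)f(q/d)$ form rather than $\mu_i(q/d)f(d)$ (equivalent via the bijection $d \mapsto q/d$, which you justify) and you write out the converse direction that the paper leaves as ``proved similarly.''
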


\begin{proof}Suppose (\ref{eq:mobiussum1}) holds.  Then
\begin{align*}
  \sum\limits_{d \mid q} \mu_i(\frac{q}{d})f(d) &= \sum\limits_{d \mid q} \mu_i(\frac{q}{d})\sum\limits_{e\mid d} g(e) \\[2ex]   &= \sum\limits_{e \mid q} g(e) \sum\limits_{d'\mid \frac{q}{e}} \mu_i(\frac{q}{d'e}) \\[2ex]   &= \sum\limits_{e \mid q} g(e) \sum\limits_{d'\mid \frac{q}{e}} \mu_i(d') \\[2ex]
   &= g(q)
\end{align*}
since $\sum\limits_{d'\mid \frac{q}{e}} \mu_i(d') = 0$ unless $\frac{q}{e} = 1$.  The other direction is proved similarly.
\end{proof}

\begin{lem}
For every $q \in \ZZp$ we have
\begin{equation*}
  \centering
  \abs{q}^2 = \sum\limits_{d \mid q} \phi_i(d)
\end{equation*}
\end{lem}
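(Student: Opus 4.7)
The plan is to mimic the classical proof of the identity $n = \sum_{d \mid n} \phi(d)$ in $\ZZ$, partitioning the residues mod $q$ by their greatest common divisor with $q$.

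First I would note that $|\ZZ[i]/q\ZZ[i]| = |q|^2$; this is a standard fact for the Gaussian integers, coming from the observation that a fundamental parallelogram for the lattice $q\ZZ[i]$ has Euclidean area $|q|^2$ (since multiplication by $q$ scales area by $|q|^2$), and each unit square of $\ZZ[i]$ contributes one residue class. Thus it suffices to produce a partition of $\ZZ[i]/q\ZZ[i]$ whose block sizes are exactly the $\phi_i(d)$ for $d \mid q$.

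Next I would, for each residue class $a + q\ZZ[i]$, associate the divisor $d=\gcd(a,q) \in \ZZp$ (well-defined up to units, so uniquely specified by our convention $\RE{d}>0$, $\IM{d}\geq 0$ after absorbing the unit into $a$). Writing $a = d \cdot b$, the condition $\gcd(a,q)=d$ is equivalent to $\gcd(b, q/d)=1$, and two such $b,b'$ give the same class modulo $q$ iff they are congruent modulo $q/d$. Hence the number of residue classes $a + q\ZZ[i]$ with $\gcd(a,q)=d$ equals $|(\ZZ[i]/(q/d)\ZZ[i])^*| = \phi_i(q/d)$.

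Summing over $d \mid q$ and re-indexing $d \mapsto q/d$, which is a bijection on the divisors in $\ZZp$, yields
\begin{equation*}
|q|^2 = \sum_{d\mid q} \phi_i(q/d) = \sum_{d\mid q} \phi_i(d),
\end{equation*}
as required. The only subtle point, and thus the main thing to be careful about, is the unit ambiguity in defining $\gcd$ on $\ZZ[i]$: one must consistently choose the representative in $\ZZp$ so that the partition is well-defined and the counting argument produces exactly $\phi_i(q/d)$ classes per divisor $d$. Once this normalisation is fixed, the rest of the argument is essentially a transcription of the classical one.
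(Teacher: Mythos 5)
Your proposal is correct and follows essentially the same route as the paper: both establish $|\ZZ[i]/q\ZZ[i]|=|q|^2$ via the area of a fundamental domain and then partition the residue classes according to their gcd with $q$, counting $\phi_i(q/d)$ classes for each divisor $d$ and re-indexing $d\mapsto q/d$ (up to the unit normalisation you rightly flag). The paper merely phrases the counting concretely as lattice points in the scaled fundamental domain $q\cdot U$ rather than abstractly as residue classes, which is a cosmetic difference.
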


\begin{proof}
  Let $U = \{x+iy | 0 \leq x,y <1 \}$.  Then $U_q := q \cdot U$ is a fundamental domain for $\nicefrac{\CC}{q\ZZ[i]}$ and we have
  \begin{align*}
    \lvert \nicefrac{\ZZ[i]}{q\ZZ[i]} \rvert &= \lvert U_q \cap \ZZ[i] \rvert \\[2ex]        &= \sum\limits_{d \mid q} \#\{r \in U_q \cap \ZZ[i] \mid gcd(r,q) = d\} \\
        &= \sum\limits_{d \mid q} \#\{r \in U_{\nicefrac{q}{d}} \cap \ZZ[i] \mid gcd(r,\frac{q}{d}) = 1\} \\
        &= \sum\limits_{d \mid q} \phi_i(\frac{q}{d})   \ = \ \sum\limits_{d \mid q} \phi_i(d) \, .
  \end{align*}
  Also, we have $\lvert \nicefrac{\ZZ[i]}{q\ZZ[i]} \rvert = vol(U_q) = \abs{q}^2$, so we are done.
\end{proof}
The next result follows directly from the previous two Lemmas.
\begin{lem} For every $q$ in $\ZZp$ we have
\begin{equation*}
  \centering
  \phi_i(q) = \abs{q}^2 \sum\limits_{d \mid q} \frac{\mu_i(d)}{\abs{d}^2}.
\end{equation*}
\label{lem:phii}
\end{lem}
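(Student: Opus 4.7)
My plan is to apply the complex Möbius inversion formula (the previous lemma) to the identity $|q|^2 = \sum_{d \mid q} \phi_i(d)$ established just above. Setting $f(q) = |q|^2$ and $g(q) = \phi_i(q)$, the hypothesis of Möbius inversion is exactly the preceding lemma, so the conclusion gives
\begin{equation*}
  \phi_i(q) \;=\; \sum_{d \mid q} \mu_i(d)\, f\!\left(\tfrac{q}{d}\right) \;=\; \sum_{d \mid q} \mu_i(d) \left\lvert \tfrac{q}{d} \right\rvert^{2}.
\end{equation*}

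The only remaining point is the multiplicativity of the squared modulus, i.e.\ $|q/d|^{2} = |q|^{2}/|d|^{2}$ for any $d \in \ZZp$ dividing $q$, which holds because $|\cdot|^{2}$ on $\ZZ[i]$ is completely multiplicative (this is the standard fact that $N(zw) = N(z)N(w)$ for the Gaussian norm). Pulling the factor $|q|^{2}$ out of the sum then yields
\begin{equation*}
  \phi_i(q) \;=\; |q|^{2} \sum_{d \mid q} \frac{\mu_i(d)}{|d|^{2}},
\end{equation*}
which is the claimed identity.

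The proof should be essentially one line once the setup is made explicit. There is no real obstacle; the hard work was done in the preceding two lemmas (establishing $|q|^{2} = \sum_{d\mid q}\phi_i(d)$ via a fundamental domain argument, and the Möbius inversion formula over $\ZZp$). The only thing to be careful about is to match the Möbius inversion convention — the lemma is stated with sums over $d \in \ZZp$ dividing $q$, and indeed every associate class of divisors of $q$ has a unique representative in $\ZZp$, so the divisor sum on the right is well-defined and the invocation is direct.
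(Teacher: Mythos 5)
Your proposal is correct and matches the paper exactly: the paper likewise derives this as an immediate consequence of the two preceding lemmas, applying M\"obius inversion over $\ZZp$ to the identity $\abs{q}^2 = \sum_{d \mid q} \phi_i(d)$ and factoring out $\abs{q}^2$ using multiplicativity of the norm. Nothing is missing.
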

For the next Lemma we need to define the sum of squares function $r_2(n)$.
\begin{defn}
For a positive integer $n$, define $r_2(n)$ by
\begin{equation*}
  r_2(n) = \#\{(a,b) \in \ZZ^2 \mid a^2+b^2 = n\}.
\end{equation*}
\end{defn}

\begin{lem} \label{lem:sumr2}
For $a\geq 0$,
\begin{equation*}
  \sum\limits_{n \leq N} n^a r_2(n) = \frac{\pi N^{a+1}}{a+1} + \mathcal{O}(N^{a+\kappa})
\end{equation*}
with $\frac{1}{4} < \kappa < \frac{1}{2}$.
\end{lem}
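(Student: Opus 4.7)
The plan is to reduce the statement to the classical Gauss circle problem and then extend to general $a\geq 0$ by partial (Abel) summation. Write $R(t):=\sum_{n\leq t}r_2(n)$, which counts lattice points $(a,b)\in\ZZ^2$ with $a^2+b^2\leq t$. Gauss's bound gives $R(t)=\pi t+O(t^{1/2})$, and Sierpi\'nski's refinement (and its subsequent improvements by van der Corput, Hua, Iwaniec--Mozzochi, Huxley, \ldots) shows $R(t)=\pi t+O(t^{\kappa})$ for some $\kappa$ strictly between $\frac14$ and $\frac12$. This is the only external input needed, and it handles the case $a=0$ directly.

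For $a>0$, I would apply Abel summation to rewrite
\begin{equation*}
  \sum_{n\leq N} n^a r_2(n) = N^a R(N) - a\int_{1}^{N} t^{a-1} R(t)\,dt .
\end{equation*}
Substituting the asymptotic $R(t)=\pi t+O(t^{\kappa})$ in both terms yields
\begin{equation*}
  \pi N^{a+1} + O(N^{a+\kappa}) - \pi a\int_{1}^{N} t^{a}\,dt + O\!\left(\int_{1}^{N} t^{a+\kappa-1}\,dt\right).
\end{equation*}
Since $a+\kappa-1>-1$, the second integral evaluates to $O(N^{a+\kappa})$, while the first gives $\pi a N^{a+1}/(a+1)+O(1)$, which is absorbed into $O(N^{a+\kappa})$ because $a+\kappa>0$. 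Collecting the main terms produces the coefficient $\pi\bigl(1-\tfrac{a}{a+1}\bigr)=\tfrac{\pi}{a+1}$, giving the claimed asymptotic.

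There is essentially no obstacle here beyond quoting the circle-problem bound: all the arithmetic content is in the base case $a=0$, and the Abel-summation step is routine and uniform in $a\geq 0$. The only mild point to be careful about is verifying that the error terms dominate the lower-order pieces for every $a\geq 0$, which follows automatically from $\kappa<\tfrac12$ and $a+\kappa>0$. If desired, one could alternatively give a direct geometric proof by summing $n^a$ over lattice points in the annulus $m-1<a^2+b^2\leq m$ and comparing with $\int\!\!\int_{x^2+y^2\leq N}(x^2+y^2)^a\,dx\,dy=\tfrac{\pi N^{a+1}}{a+1}$, estimating the boundary contribution by the same circle-problem bound, but the Abel-summation route is cleaner and more transparent.
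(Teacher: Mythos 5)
Your proposal is correct and follows essentially the same route as the paper: both reduce the statement to the Gauss circle problem bound $\sum_{n\leq N}r_2(n)=\pi N+\mathcal{O}(N^{\kappa})$ and then apply partial summation (the paper in discrete form, you via the integral Abel formula), arriving at the same main term $\frac{\pi}{a+1}N^{a+1}$ and error $\mathcal{O}(N^{a+\kappa})$.
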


\begin{proof}Using partial summation and the fact from \cite{GCP} that $\sum\limits_{n\leq N} r_2(n)=\pi N + \bgoh{N^{\kappa}}$ for $\frac{1}{4}<\kappa<\frac{1}{2}$, we have
\begin{align*}
    \sum\limits_{n \leq N} n^a r_2(n) &= \sum\limits_{n \leq N} (n^a - (n+1)^a)\sum\limits_{k \leq n} r_2(k) + (N+1)^a \sum\limits_{n \leq N} r_2(n) \\[2ex]     &= \sum\limits_{n \leq N} (n^a - (n+1)^a)(\pi n + \mathcal{O}(n^{\kappa})) + (N+1)^a (\pi N + \mathcal{O}(N^{\kappa})) \\[2ex]     &= \pi \sum\limits_{n \leq N} n^a + \mathcal{O}(\sum\limits_{n \leq N} n^{a-1+\kappa}) + \mathcal{O}(N^{a+\kappa}) \\[2ex]     &= \frac{\pi}{a+1} N^{a+1} + \mathcal{O}(N^{a+\kappa}).
\end{align*}
\end{proof}

\begin{lem} \label{lem:sumq>Q}
For $s>1$, we have
\begin{equation*}
    \sum\limits_{\abs{q} \geq Q} \frac{1}{\abs{q}^{2s}} \ll_s \frac{1}{Q^{2(s-1)}}.
\end{equation*}
\end{lem}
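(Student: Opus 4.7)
The plan is to bound the tail sum by grouping the Gaussian integers into dyadic annuli in the complex plane and exploiting the fact that the number of Gaussian integers in a disc of radius $R$ is of order $R^2$. This fact is essentially contained in Lemma \ref{lem:sumr2} applied with $a=0$, since $\sum_{n \leq R^2} r_2(n) = \pi R^2 + \bgoh{R^{2\kappa}}$ counts $(a,b) \in \ZZ^2$ with $a^2+b^2 \leq R^2$, i.e., Gaussian integers of norm $\leq R$.

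First I would decompose the sum dyadically, writing
\begin{equation*}
  \sum_{\abs{q} \geq Q} \frac{1}{\abs{q}^{2s}} \ = \ \sum_{k=0}^{\infty} \ \sum_{2^k Q \leq \abs{q} < 2^{k+1} Q} \frac{1}{\abs{q}^{2s}},
\end{equation*}
where the outer sum is over $q \in \ZZp$. Then I would estimate each annular piece by bounding $|q|^{-2s}$ below by $(2^{k+1}Q)^{-2s}$ and counting the lattice points in the annulus $\{2^k Q \leq \abs{q} < 2^{k+1}Q\}$ by $\bgoh{4^k Q^2}$, using Lemma \ref{lem:sumr2} (with $a=0$) applied at radii $2^{k+1}Q$ and $2^k Q$ and subtracting.

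This yields a bound of the form
\begin{equation*}
  \sum_{\abs{q} \geq Q} \frac{1}{\abs{q}^{2s}} \ \ll \ \sum_{k=0}^{\infty} \frac{4^k Q^2}{(2^k Q)^{2s}} \ = \ \frac{1}{Q^{2(s-1)}} \sum_{k=0}^{\infty} \left( 2^{2-2s} \right)^{k}.
\end{equation*}
Since $s > 1$ gives $2^{2-2s} < 1$, the geometric series converges to a constant depending only on $s$, which delivers the claimed estimate.

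There is no real obstacle here: the only thing to be careful about is that the implied constant in the lattice-point count is independent of $k$ so that summing the geometric tail is legitimate, and that the sum in the statement is understood as running over $\ZZp$ in accordance with the convention fixed at the start of Section \ref{sec:prelimGI} (switching to the full $\ZZ[i]$ only changes the constant by a factor of $4$). An alternative to the dyadic decomposition would be Abel summation directly against $\sum_{n \leq N} r_2(n) = \pi N + \bgoh{N^\kappa}$, which produces the same answer via $\int_{Q^2}^\infty u^{-s}\,du = Q^{-2(s-1)}/(s-1)$, but the dyadic argument is cleanest.
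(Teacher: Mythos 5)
Your proposal is correct and takes essentially the same route as the paper's own proof: both decompose the tail over dyadic annuli $2^kQ \leq \abs{q} < 2^{k+1}Q$, count the Gaussian integers in each annulus via $\sum_{n\leq N} r_2(n) = \pi N + \bgoh{N^{\kappa}}$, and sum the resulting geometric series, which converges precisely because $s>1$. (One wording slip: on each annulus you bound $\abs{q}^{-2s}$ from \emph{above} by $(2^kQ)^{-2s}$ — as your displayed inequality in fact does — not from below.)
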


\begin{proof} Rewriting the left hand side as a sum over annuli and using the fact that $\sum\limits_{n\leq N} r_2(n)=\pi N + \bgoh{N^{\kappa}}$, we have
\begin{align*}
    \sum\limits_{\abs{q} \geq Q} \dfrac{1}{\abs{q}^{2s}} &= \sum\limits_{k=0}^{\infty} \sum\limits_{2^kQ \leq \abs{q} <2^{k+1}Q} \dfrac{1}{\abs{q}^{2s}} \\[2ex]    &\leq  \sum\limits_{k=0}^{\infty} \dfrac{1}{2^{2ks}Q^{2s}} \left( \sum\limits_{n \leq 2^{2\abs{k+1}}Q^2}r_2(n) - \sum\limits_{n \leq 2^{2k}Q^2} r_2(n) \right) \\[2ex]    &\ll \sum\limits_{k=0}^{\infty} \dfrac{2^{2k}Q^{2}}{2^{2ks}Q^{2s}}\\[2ex]    &\ll \dfrac{1}{Q^{2(s-1)}} \sum\limits_{k=0}^{\infty} \dfrac{1}{2^{2k(s-1)}} \\[2ex]    &\ll_s  \dfrac{1}{Q^{2(s-1)}}.
\end{align*}
\end{proof}

\begin{fact}
We have, for $Re(s)>1$ and Gaussian primes $p$,
\begin{align*}
    \zeta_i(s) &:= \sum\limits_{q \in \ZZp} \dfrac{1}{\abs{q}^{2s}}\\
    &= \prod\limits_{\substack{p \in \ZZp \\ (p \text{ prime})}} (1-\abs{p}^{-2s})^{-1},
\end{align*}
and
\begin{equation*}
  \zeta_i^{-1}(s) = \sum\limits_{q \in \ZZp} \frac{\mu_i(q)}{\lvert q \rvert^{2s}}.
\end{equation*}
\end{fact}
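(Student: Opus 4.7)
The plan is to prove both identities by adapting the classical Euler product and Möbius inversion arguments for the Riemann zeta function to the setting of $\ZZp$, exploiting the unique factorization displayed in \eqref{eqn:gint}. The restriction of the index set to $\ZZp$ (rather than all of $\ZZ[i] \setminus \{0\}$) is what makes the factorization well-defined, since it gives one canonical representative of each orbit of the unit group $\{\pm 1, \pm i\}$.

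First, I would establish absolute convergence of the Dirichlet series $\sum_{q \in \ZZp} \abs{q}^{-2s}$ for $\RE{s}>1$, which follows from Lemma \ref{lem:sumq>Q} (applied with $\RE{s}$ in place of $s$ and $Q=1$) and justifies all the rearrangements below. For the Euler product, I would then fix a threshold $N$ and form the finite subproduct over those Gaussian primes $p \in \ZZp$ with $\abs{p} \leq N$, expanding each geometric factor as $(1-\abs{p}^{-2s})^{-1} = \sum_{a \geq 0} \abs{p}^{-2sa}$. By unique factorization in $\ZZp$, multiplying these finitely many absolutely convergent series and collecting like terms yields $\sum \abs{q}^{-2s}$ taken over those $q \in \ZZp$ whose prime decomposition only uses primes of norm at most $N$. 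Letting $N \to \infty$ and invoking absolute convergence to absorb the tail completes the Euler product identity.

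For the Möbius formula, I would take the reciprocal of the Euler product, writing $\zeta_i(s)^{-1} = \prod_{p}(1-\abs{p}^{-2s})$, and expand this product by distributivity. Each resulting term corresponds to selecting a finite collection of distinct primes $\{p_{j_1},\dots,p_{j_k}\}$ in $\ZZp$, contributing $(-1)^k \abs{p_{j_1} \cdots p_{j_k}}^{-2s}$, which is exactly $\mu_i(q) \abs{q}^{-2s}$ for the squarefree Gaussian integer $q = p_{j_1} \cdots p_{j_k}$ by the definition of $\mu_i$. Summing over all such selections recovers $\sum_{q \in \ZZp} \mu_i(q) \abs{q}^{-2s}$, with non-squarefree $q$ contributing nothing, consistent with $\mu_i$ vanishing on those values. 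Alternatively, one could deduce the second identity from the first by applying the Möbius-inversion Lemma from Section \ref{sec:prelimGI} to the pair $f(q)=\abs{q}^{-2s}\cdot \mathbf{1}[q=u]$ and its summatory partner, or simply by multiplying the series $\zeta_i(s)$ with $\sum_q \mu_i(q)\abs{q}^{-2s}$ term by term and invoking the first Lemma of Section \ref{sec:prelimGI}.

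There is no real obstacle here, given unique factorization in $\ZZ[i]$; the calculation is a direct translation of the classical Riemann zeta argument, with $\abs{q}^2$ playing the role of $q$ in the rational case. The only subtle point is the bookkeeping of units, which the paper handles by the canonical choice \eqref{eqn:gint} of representatives lying in $\ZZp$, so that every prime and every non-zero Gaussian integer appears exactly once in the relevant product and sum.
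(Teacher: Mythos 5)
Your argument is correct, and since the paper states this result only as a Fact with no accompanying proof, your standard Euler-product expansion over Gaussian primes in $\ZZp$ together with the expansion of $\prod_p (1-\abs{p}^{-2s})$ into $\sum_q \mu_i(q)\abs{q}^{-2s}$ is exactly the intended justification. The one subtlety — that the product of chosen prime representatives need not itself lie in $\ZZp$ — is harmless for the reason you note: only $\abs{q}$ enters, and each associate class has a unique representative in $\ZZp$, so the bookkeeping of units works out.
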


\begin{lem}\label{lem:sumphii}
For a positive integer $Q$, we have
\begin{equation*}
  \sum\limits_{\substack{q\in \ZZp \\ \abs{q} \leq Q}} \phi_i(q) = \frac{\pi}{8} \zeta_i^{-1} \left(2 \right)Q^4 + \mathcal{O}\left(Q^{2+2\kappa}\right).
\end{equation*}
\end{lem}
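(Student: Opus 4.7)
The plan is to evaluate this sum by Möbius inversion, paralleling the classical derivation of $\sum_{n \leq N} \phi(n) \sim \frac{3}{\pi^2} N^2$.

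First, I apply Lemma \ref{lem:phii} to rewrite the summand, giving
\begin{equation*}
\sum_{\substack{q \in \ZZp \\ |q| \leq Q}} \phi_i(q) = \sum_{\substack{q \in \ZZp \\ |q| \leq Q}} |q|^2 \sum_{d \mid q} \frac{\mu_i(d)}{|d|^2},
\end{equation*}
and the goal is then to interchange sums so that $d$ is outer and $e = q/d$ is inner. The main bookkeeping subtlety, which I expect to be the only delicate point, is that divisors $d$ are taken from $\ZZp$ while $q$ also ranges in $\ZZp$, so the quotient $q/d$ need not lie in $\ZZp$ (for example $2/(1+i) = 1-i \notin \ZZp$). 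I plan to handle this by observing that each of $\phi_i(q)$, $|q|^2$, and $\sum_{d \mid q} \mu_i(d)/|d|^2$ depends only on the ideal $q\ZZ[i]$, so replacing the outer $\ZZp$-sum by one over $\ZZ[i] \setminus \{0\}$ multiplies the total by $4$ (one associate in each quadrant). With $q \in \ZZ[i] \setminus \{0\}$ the quotient $e = q/d$ then ranges freely over $\ZZ[i]$ with $0 < |e| \leq Q/|d|$.

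Next, the inner sum becomes $|d|^2 \sum_{0 < |e| \leq Q/|d|,\, e \in \ZZ[i]} |e|^2 = |d|^2 \sum_{n \leq (Q/|d|)^2} n\, r_2(n)$, which by Lemma \ref{lem:sumr2} with $a = 1$ equals $|d|^2 \left( \tfrac{\pi}{2} (Q/|d|)^4 + \mathcal{O}((Q/|d|)^{2+2\kappa}) \right)$. Summing over $d$ yields
\begin{equation*}
4 \sum_{\substack{q \in \ZZp \\ |q| \leq Q}} \phi_i(q) = \frac{\pi Q^4}{2} \sum_{\substack{d \in \ZZp \\ |d| \leq Q}} \frac{\mu_i(d)}{|d|^4} + \mathcal{O}\!\left( Q^{2+2\kappa} \sum_{\substack{d \in \ZZp \\ |d| \leq Q}} \frac{1}{|d|^{2+2\kappa}} \right).
\end{equation*}

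Finally I complete the Möbius series to $\zeta_i^{-1}(2)$: Lemma \ref{lem:sumq>Q} with $s = 2$ bounds the tail $\sum_{|d| > Q} 1/|d|^4$ by $Q^{-2}$, contributing $\mathcal{O}(Q^2)$ to the main term, while the error-term sum is bounded by the convergent series $\zeta_i(1+\kappa) < \infty$ since $1 + \kappa > 1$. Dividing by $4$ and using $\kappa > \tfrac14$ so that $\mathcal{O}(Q^{2+2\kappa})$ absorbs the $\mathcal{O}(Q^2)$ term produces the claimed asymptotic. Beyond the associate-invariance argument, all analytic ingredients are direct applications of lemmas already proved earlier in the section.
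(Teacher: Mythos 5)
Your proposal is correct and follows essentially the same route as the paper: apply Lemma \ref{lem:phii}, interchange the order of summation, evaluate the inner sum via $r_2$ and Lemma \ref{lem:sumr2} with $a=1$, and complete the M\"obius series to $\zeta_i^{-1}(2)$ using Lemma \ref{lem:sumq>Q}. The only difference is bookkeeping: you handle the associate issue by multiplying by $4$ and summing over all of $\ZZ[i]\setminus\{0\}$, whereas the paper keeps the inner sum over $\ZZp$ and counts elements of a given norm $k$ as $r_2(k)/4$ --- the same idea, and both yield the factor $\frac{\pi}{8}$.
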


\begin{proof}By Lemmas \ref{lem:phii} and \ref{lem:sumr2}, we have
  \begin{align*}
    \sum\limits_{\substack{q\in \ZZp \\ \abs{q} \leq Q}} \phi_i(q) &= \sum\limits_{\substack{q\in \ZZp \\ \abs{q} \leq Q}} \abs{q}^2 \sum\limits_{d \mid q} \dfrac{\mu_i(d)}{\abs{d}} \\[2ex]
    &= \sum\limits_{\substack{d\in \ZZp \\ \abs{d} \leq Q}} \mu_i(d) \sum\limits_{\substack{q'\in \ZZp \\ \abs{q'} \leq \frac{Q}{\abs{d}}}} \abs{q'}^2  \\[2ex]
    &= \sum\limits_{\substack{d\in \ZZp \\ \abs{d} \leq Q}} \mu_i(d) \sum\limits_{k \leq \frac{Q^2}{\abs{d}^2}}  \dfrac{r_2(k)}{4} k  \\[2ex]
    &= \frac{1}{4} \sum\limits_{\substack{d\in \ZZp \\ \abs{d} \leq Q}} \mu_i(d) \left(\dfrac{\pi Q^4}{2\abs{d}^4} + \mathcal{O}\left(\dfrac{Q^{2+2\kappa}}{\abs{d}^{2+2\kappa}}\right)\right) \\[2ex]
    &= \frac{\pi}{8} \zeta_i^{-1}\left(2\right)Q^4 + \mathcal{O}\left(Q^{2+2\kappa}\right),
  \end{align*}
using Lemma \ref{lem:sumq>Q} and the fact above with $s=2$ and $s=1+\kappa$.
\end{proof}

Now, for a multiplicative function $f:\ZZp \rightarrow \CC$ (with respect to \eqref{eqn:gint}), we have the following easily proved result.
\begin{lem} \label{lem:multiplicative} If $f$ is multiplicative then,
\begin{equation*}
  \sum\limits_{d \mid q} f(d) = \prod\limits_{i=1}^{k} (f(1) + f(p_i) + \ldots + f(p_i^{\alpha_i})).
\end{equation*}
If $f$ is completely multiplicative then
\begin{equation*}
  \sum\limits_{d \mid q} f(d) = \prod\limits_{i=1}^k \left(\frac{f(p_i)^{\alpha_i + 1}-1}{f(p_i) - 1} \right).
\end{equation*}
\end{lem}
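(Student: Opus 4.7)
The plan is to mimic the classical proof for multiplicative arithmetic functions on $\ZZ$, adapted to the unique factorisation convention (\ref{eqn:gint}) and the convention that $\sum_{d\mid q}$ runs over $d\in\ZZp$.

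First I would describe the divisor set. Since every $d\in\ZZp$ dividing $q=u\cdot p_1^{\alpha_1}\cdots p_k^{\alpha_k}$ has a unique representation of the form $d=p_1^{\beta_1}\cdots p_k^{\beta_k}$ with $0\le\beta_i\le\alpha_i$ (the unit of $d$ being forced to be $1$ by membership of $\ZZp$), the map $(\beta_1,\ldots,\beta_k)\mapsto p_1^{\beta_1}\cdots p_k^{\beta_k}$ is a bijection between $\{0,\ldots,\alpha_1\}\times\cdots\times\{0,\ldots,\alpha_k\}$ and the divisors of $q$ in $\ZZp$.

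Next I would expand the product on the right-hand side. Distributing gives
\begin{equation*}
\prod_{i=1}^{k}\bigl(f(1)+f(p_i)+\cdots+f(p_i^{\alpha_i})\bigr)=\sum_{\beta_1=0}^{\alpha_1}\cdots\sum_{\beta_k=0}^{\alpha_k} f(p_1^{\beta_1})\cdots f(p_k^{\beta_k}).
\end{equation*}
Since the distinct Gaussian primes $p_i$ are pairwise coprime, so are the prime powers $p_i^{\beta_i}$; multiplicativity then collapses $f(p_1^{\beta_1})\cdots f(p_k^{\beta_k})=f(p_1^{\beta_1}\cdots p_k^{\beta_k})=f(d)$. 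Combined with the bijection above, this yields $\sum_{d\mid q}f(d)$, proving the first identity.

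For the second identity, complete multiplicativity gives $f(p_i^{\beta_i})=f(p_i)^{\beta_i}$, so each inner factor becomes a geometric sum
\begin{equation*}
\sum_{\beta=0}^{\alpha_i} f(p_i)^{\beta}=\frac{f(p_i)^{\alpha_i+1}-1}{f(p_i)-1},
\end{equation*}
valid whenever $f(p_i)\neq 1$ (in the exceptional case $f(p_i)=1$ one simply has $\alpha_i+1$, consistent with the limiting value of the quotient). There is no serious obstacle: the only point requiring care is the bookkeeping around the unit $u$ in (\ref{eqn:gint}), which is handled by restricting divisors to $\ZZp$ so that each divisor corresponds to exactly one exponent tuple.
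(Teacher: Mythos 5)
Your proof is correct and is the standard argument; the paper offers no proof of this lemma at all (it is introduced as an ``easily proved result''), so your write-up simply supplies the omitted verification in the expected way. One small caveat: membership of $\ZZp$ does not force the unit in \eqref{eqn:gint} to be $1$ (e.g.\ $5=-i(1+2i)(2+i)$ lies in $\ZZp$ with unit $-i$), and likewise a power $p_i^{\beta_i}$ need not itself lie in $\ZZp$, so the bijection should be stated as sending $(\beta_1,\ldots,\beta_k)$ to the unique $\ZZp$-representative of the associate class of $p_1^{\beta_1}\cdots p_k^{\beta_k}$, with $f$ read as a function of associate classes --- the same tacit convention the paper's own statement uses --- after which your expansion, the multiplicativity step, and the geometric-series evaluation (with your noted proviso $f(p_i)\neq 1$) go through verbatim.
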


\begin{lem} \label{lem:abel} \textbf{\emph{Abel's Summation Formula}}  Suppose we have functions $a:\NN \rightarrow \RR$ and $f:\RR \rightarrow \RR$, and that $f'(x)$ exists and is continuous.  Let $A(x)=\sum\limits_{n \leq x} a(n)$.  Then
\begin{equation*}
  \centering
  \sum\limits_{n \leq x}a(n)f(n) = A(x)f(x) - \int_{1}^{x} \! A(t)f'(t) \, \mathrm{d}t.
\end{equation*}
\end{lem}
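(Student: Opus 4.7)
The plan is to prove this by reducing the sum to a telescoping identity (summation by parts) and then recognizing the resulting increments $f(n+1)-f(n)$ as integrals of $f'$. This is the standard Abel/partial summation argument; the only care needed is bookkeeping at the right endpoint when $x$ is not an integer.

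First I would write $N = \lfloor x \rfloor$ and note that $A$ is a right-continuous step function which is constant on each interval $[n, n+1)$, in particular $A(x) = A(N)$. Setting $A(0) := 0$, I would then use $a(n) = A(n) - A(n-1)$ to rewrite
\begin{equation*}
\sum_{n \leq x} a(n) f(n) = \sum_{n=1}^{N} \bigl(A(n) - A(n-1)\bigr) f(n).
\end{equation*}
Shifting the index in the $A(n-1)$ piece gives the discrete summation-by-parts identity
\begin{equation*}
\sum_{n=1}^{N} \bigl(A(n) - A(n-1)\bigr) f(n) = A(N) f(N) - \sum_{n=1}^{N-1} A(n)\bigl(f(n+1) - f(n)\bigr).
\end{equation*}

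Next I would convert the differences into integrals. Since $f'$ is continuous, the fundamental theorem of calculus gives $f(n+1) - f(n) = \int_{n}^{n+1} f'(t)\, \mathrm{d}t$, and since $A$ is constant equal to $A(n)$ on $[n, n+1)$,
\begin{equation*}
\sum_{n=1}^{N-1} A(n)\bigl(f(n+1) - f(n)\bigr) = \sum_{n=1}^{N-1} \int_{n}^{n+1} A(t) f'(t)\, \mathrm{d}t = \int_{1}^{N} A(t) f'(t)\, \mathrm{d}t.
\end{equation*}

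Finally, to pass from $A(N)f(N)$ to $A(x)f(x)$, I would add and subtract, using $A(x) = A(N)$ on $[N, x]$:
\begin{equation*}
A(N) f(N) = A(x) f(x) - A(x)\bigl(f(x) - f(N)\bigr) = A(x) f(x) - \int_{N}^{x} A(t) f'(t)\, \mathrm{d}t.
\end{equation*}
Combining the last two displays absorbs the $[N, x]$ piece into the integral $\int_{1}^{x}$, yielding the claimed formula. The only real subtlety, and the step I would want to be most careful about, is this final endpoint adjustment; everything else is a mechanical rearrangement.
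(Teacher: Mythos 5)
Your proof is correct: the telescoping rewrite of $a(n)=A(n)-A(n-1)$, the conversion of $f(n+1)-f(n)$ into $\int_n^{n+1}f'(t)\,\mathrm{d}t$ using that $A$ is constant on $[n,n+1)$, and the endpoint adjustment from $A(N)f(N)$ to $A(x)f(x)$ all check out, and this is the standard partial-summation argument. The paper simply states this lemma as a known fact without proof, so your argument supplies exactly the routine proof that is implicitly being invoked.
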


\section{Preliminary Lemmas - Ford Spheres}

Before we can begin looking at moments for Ford spheres we will need to gather a few basic results about their structure.  In Section \ref{sec:gencff} we introduce a complex version of mediants and show that this can be used to generate every Gaussian rational in $I_2$ if we start with $0, 1, i$ and $1+i$.  In Section \ref{sec:consecspheres} we classify when two Gaussian integers appear as denominators of consecutive fractions.  Finally, in Section \ref{sec:countdenoms} we consider, for a given Gaussian integers $s$, how to count the Gaussian integers $s'$ which are the denominator of a fraction that is consecutive to a fraction of the form $\frac{r}{s}$.

\subsection{Generating $\GS$} \label{sec:gencff}

The Farey fractions are generated from 0 and 1 by taking mediants, and we have seen that doing this will eventually generate every rational number in $(0,1)$.  In the complex case, we must begin with 0, 1, $i$ and $1+i$ and take a complex version of mediants instead.  Given two adjacent fractions $\frac{r}{s}$ and $\frac{r'}{s'}$, the four fractions which are adjacent to both are given by
  \begin{equation}\label{eqn:compmeds}
  \centering
    \dfrac{r+ur'}{s+us'} \quad \text{ for } \quad u \in \{ \pm 1, \pm i\}.
  \end{equation}
Unlike taking mediants of the usual Farey fractions, which always results in a denominator larger than that of either initial fraction, this type of mediant can result in a smaller denominator.  Analogously, when taking mediants of real Farey fractions $\frac{p}{q}$ and $\frac{p'}{q'}$ we could instead consider
  \begin{equation*}
  \centering
    \dfrac{p+u'p'}{q+u'q'} \text{  for } u' \in \{ \pm 1 \}.
  \end{equation*}
This would sometimes give us a denominator smaller than either $q$ or $q'$, specifically when we take $u'=-1$.  Taking mediants in this way only gives us new Farey fractions when $u'=1$, so we can discard $u'=-1$ and still generate every fraction.  However, when taking complex mediants it is not clear which choices of unit $u$ will lead to larger denominators and which will lead to smaller, so we need to consider all four units and ignore any repeated resulting fractions.

We now prove that beginning with 0, 1, $i$, and $1+i$, taking mediants as in (\ref{eqn:compmeds}) will generate every Gaussian rational in the unit square of $\mathbb{C}$.
\begin{lem}\label{lem:EveryCFF}
Given Gaussian integers $r=r_1+ir_2$ and $s=s_1+is_2$ such that $\frac{r}{s} \in I_2$ and $(r,s) = u$ for some unit $u$ in $\ZZ[i]$, $\frac{r}{s}$ occurs as a complex mediant of two consecutive fractions in $I_2$ with denominators of modulus less than $\abs{s}$.
\end{lem}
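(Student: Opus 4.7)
The lemma is the Gaussian analogue of Lemma 2.1 for the real Farey fractions, and I would imitate that proof in $\ZZ[i]$: use Bezout's identity and Euclidean division, and exploit the four units $\{\pm 1, \pm i\}$ of $\ZZ[i]$ to force both denominators below $\abs{s}$.

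After absorbing the unit into $r$, assume $(r,s)=1$ and $\abs{s}>1$ (the case $\abs{s}=1$ puts $\frac{r}{s}$ at a corner of $I_2$, a base case). Bezout in $\ZZ[i]$ yields $a_0,b_0\in\ZZ[i]$ with $b_0 r - a_0 s = 1$, and the full solution family is $(a,b) = (a_0 + tr,\, b_0 + ts)$, $t\in\ZZ[i]$. Division with remainder in $\ZZ[i]$ lets me choose $t$ so that $0 < \abs{b} \le \abs{s}/\sqrt{2}$. For units $\epsilon,u$ to be chosen, set $c = u^{-1}(\epsilon r - a)$ and $d = u^{-1}(\epsilon s - b)$; then $a + uc = \epsilon r$ and $b + ud = \epsilon s$, so the complex mediant $\frac{a+uc}{b+ud}$ is exactly $\frac{r}{s}$. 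A direct computation gives $bc - ad = u^{-1}\epsilon(br - as) = u^{-1}\epsilon$, a unit, so $\frac{a}{b}$ and $\frac{c}{d}$ are adjacent.

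For the denominator bound $\abs{d} = \abs{\epsilon s - b} < \abs{s}$, I would show that some $\epsilon\in\{\pm 1,\pm i\}$ works. If all four failed, then $\abs{\epsilon s - b}^2 \ge \abs{s}^2$ for every $\epsilon$ would reduce to $\abs{\RE{\bar s b}} \le \abs{b}^2/2$ and $\abs{\IM{\bar s b}} \le \abs{b}^2/2$, whence $\abs{s}^2\abs{b}^2 = \abs{\bar s b}^2 \le \abs{b}^4/2$ and so $\abs{b} \ge \abs{s}\sqrt{2}$, contradicting $\abs{b} \le \abs{s}/\sqrt{2}$. Picking a working $\epsilon$ gives $\abs{d} < \abs{s}$ automatically.

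It remains to place $\frac{a}{b}$ and $\frac{c}{d}$ inside $I_2$. From the identities $\frac{a}{b} - \frac{r}{s} = -\frac{\eta}{bs}$ and $\frac{c}{d} - \frac{r}{s} = \frac{\eta}{s(\epsilon s - b)}$, where $\eta = br - as$ is a unit, each parent lies within distance $\frac{1}{\abs{s}\min(\abs{b},\abs{d})}$ of $\frac{r}{s}$, which is sufficient when $\frac{r}{s}$ is well inside $I_2$. The hard part will be when $\frac{r}{s}$ sits close to an edge or corner of $I_2$: a perturbation of size $\frac{1}{\abs{s}\abs{b}}$ or $\frac{1}{\abs{s}\abs{d}}$ may then push a parent out. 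I expect the resolution to be a case analysis on the position of $\frac{r}{s}$, using the remaining freedom to further translate $b$ by multiples of $s$ (with $a$ shifting correspondingly) or to change the unit $u$, so that both parents end up in $I_2$ in every case.
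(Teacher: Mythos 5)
Your algebraic core is sound and runs parallel to the paper's: a Bezout solution $br-as=$ unit, translation by multiples of $s$ to make $\abs{b}$ small, and a choice among the four units to force the second denominator $\abs{\epsilon s-b}<\abs{s}$ (the paper does this with a geometric quarter-circle argument for $\abs{s-vx}<\abs{s}$; your averaging-over-units contradiction is a clean equivalent), after which the mediant identity and adjacency computations are correct. You also silently have what is needed for ``consecutive'': $\frac{r}{s}$ is adjacent to both constructed parents ($br-as$ and $dr-cs$ are units) and its sphere has radius $\tfrac{1}{2\abs{s}^2}<\tfrac{1}{2S^2}$ for $S<\abs{s}$, which is exactly the paper's closing remark — but you never state this, and the lemma asks for consecutive parents, not merely adjacent ones.

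The genuine gap is the step you yourself flag as unfinished: showing both parents lie in $I_2$. This is the delicate point of the lemma, and your proposed repair (``translate $b$ by further multiples of $s$ or change $u$'') is in tension with the bounds you just established — adding $s$ to $b$ generally destroys $\abs{b}\le\abs{s}/\sqrt{2}$, and changing the mediant unit changes which $\epsilon$'s are admissible, so it is not clear any residual freedom survives; a distance bound of size $\tfrac{1}{\abs{s}\abs{b}}$ alone cannot settle points near $\partial I_2$, since $\abs{b}$ can be as small as $1$. The paper resolves this differently: it chooses the Bezout pair so that one parent $\frac{y}{x}$ lies in $I_2$ from the outset, and then argues that the other parent $\frac{a}{b}$ must lie in $I_2$ because its sphere is tangent to the sphere of a fraction in the interior of $I_2$ (cases 1 and 3), while in the remaining boundary–boundary case it replaces a candidate $\frac{a'}{b'}\notin I_2$ by its mirror image across $\partial I_2$, which preserves the denominator modulus and adjacency. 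Without an argument of this kind (or some substitute), your proof does not yet establish the lemma.
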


\begin{proof}
We argue by induction on $\abs{s}$.
Assume that all Gaussian rationals in $I_2$ with denominator of modulus less than $\abs{s}$ have already been found.
Now, since $gcd(r,s) = u$ we can find $x,y \in \ZZ[i]$ such that
\begin{equation*}
  \centering
  rx-sy = u
\end{equation*}
with $\abs{x} < \abs{s}$ and $\frac{y}{x} \in I_2$.  So by the assumption, $\frac{y}{x}$ has already been found.
Let
\begin{equation*}
  \frac{a}{b} = \frac{r-vy}{s-vx}
\end{equation*}
where $v$ is a unit to be decided later.  We claim that $\frac{a}{b}$ is adjacent to $\frac{r}{s}$ and $\frac{y}{x}$, and $\abs{b}<\abs{s}$.  First,
\begin{align*}
    ax-by &= (r-vy)x-(s-vx)y\\
        &= rx - sy\\
        &= u
\end{align*}
so $\frac{a}{b}$ is adjacent to $\frac{y}{x}$.  Similarly,
\begin{align*}
    as-br &= (r-vy)s-(s-vx)r \\
        &= v(rx - sy) \\
        &= vu \in \ZZ[i]^*
\end{align*}
so $\frac{a}{b}$ is adjacent to $\frac{r}{s}$.

\begin{figure}
    \centering
    \includegraphics[width=8cm]{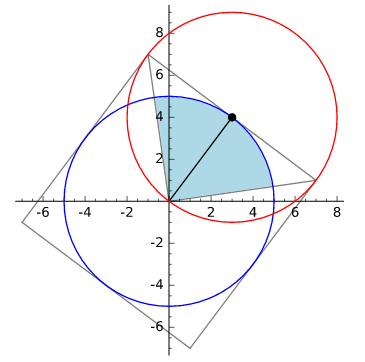}
    \caption{Diagram for the proof of Lemma \ref{lem:EveryCFF}.  The point $s$ is shown in black.  UC is the red circle, LC is the blue circle and QC is the shaded quarter circle.}
    \label{fig:EveryCFF}
\end{figure}

Now, to show that $\abs{b}<\abs{s}$, consider Figure \ref{fig:EveryCFF}.  The circle $UC$ contains all points which are within $\abs{s}$ of $s$, this is where we want $vx$ to be.  The circle $LC$ contains all the possible locations of $x$, since $\abs{x} < \abs{s}$.  If we split $LC$ into quarters we can force $vx$ to lie in any one of those quarters by choosing the unit $v$ accordingly.  In particular, we can choose $v$ so that $vx$ lies in $QC$, and so in $UC$. Thus $\abs{b} = \abs{s-vx} < \abs{s}$ as required.

Now all that is left to check is that $\frac{a}{b} \in I_2$.  We know that $\frac{r}{s}, \frac{y}{x} \in I_2$ so there are three possibilities:
\begin{enumerate}
  \item $\frac{r}{s}, \frac{y}{x} \in \mathring{I}_2$,
  \item $\frac{r}{s}, \frac{y}{x} \in I_2 \backslash \mathring{I}_2$, or
  \item $\frac{r}{s} \in \mathring{I}_2$ and $\frac{y}{x} \in I_2 \backslash \mathring{I}_2$ or vice versa.
\end{enumerate}
In cases $1$ and $3$ at least one of the two fractions is in $\mathring{I}_2$ and $\frac{a}{b}$ is adjacent to that fraction so, as their spheres are tangent, we must have $\frac{a}{b} \in I_2$.  In case 2 however, it is possible to choose $\frac{a'}{b'} \notin I_2$ with $\abs{b'} < \abs{s}$ which is adjacent to both $\frac{r}{s}$ and $\frac{y}{x}$.  But in this case we could always instead choose $\frac{a}{b} \in I_2$, the mirror image of $\frac{a'}{b'}$ over the boundary of $I_2$.  Thus, $\frac{a}{b} \in I_2$.
Note also that the fractions $\frac{a}{b}$ and $\frac{y}{x}$ are consecutive in $\GS$ for $S$ such that $S < \abs{s}$ as their spheres are tangent (since they are adjacent) and $\frac{r}{s}$ is a fraction with a sphere which is tangent to both and has radius less than $\frac{1}{2S^2}$.
\end{proof}

\subsection{Classifying Consecutive for Ford Spheres} \label{sec:consecspheres}
In Section \ref{sec:spheres} we give a geometric definition of consecutivity for Ford spheres.  However, for our moment calculation we will need a set of criteria that tell us exactly when a given pair of denominators are consecutive in $\GS$.

In $\FQ$, $q$ and $q'$ are called consecutive if they are denominators of two fractions which are consecutive.  For $\FQ$ we have the following classification of consecutivity for denominators.

\begin{lem} Denominators $q$ and $q'$ will be consecutive in $\FQ$ if and only if all of the following are satisfied:
\begin{enumerate}
  \item $1 \leq q, q' \leq Q$,
  \item $(q,q') = 1$, and
  \item $q+q' > Q$.
\end{enumerate}
Furthermore, for each pair of denominators $q, q'$ satisfying these conditions there will be exactly two pairs of consecutive fractions with denominators $q$ and $q'$.  In one case $\frac{p_1}{q} < \frac{p_1'}{q'}$, and in the other $\frac{p_2}{q}> \frac{p_2'}{q'}$
\end{lem}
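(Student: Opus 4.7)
The plan is to prove both directions of the equivalence and then treat the counting separately. The forward direction is immediate from the definitions: if $\frac{p}{q}$ and $\frac{p'}{q'}$ are consecutive in $\FQ$ with $\frac{p}{q} < \frac{p'}{q'}$, then $q, q' \le Q$ by membership in $\FQ$, giving (1); the adjacency relation $qp' - pq' = 1$ forces $(q, q') = 1$, giving (2); and (3) is built into the definition of consecutive.

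For the backward direction, I assume (1), (2), (3) and construct the pair. Since $(q, q') = 1$, Bezout supplies integers $p_0, p_0'$ with $qp_0' - p_0 q' = 1$; by shifting $(p_0, p_0') \mapsto (p_0 + tq, p_0' + tq')$ for an appropriate $t \in \ZZ$, I normalize to $0 \le p_0 < q$, and then the identity itself pins $p_0'$ into $[1, q']$ (with the slight boundary case $q = 1$ yielding $p_0 = 0$, $p_0' = 1$ handled directly). The resulting fractions $\frac{p_0}{q} < \frac{p_0'}{q'}$ lie in $[0,1]$ and are adjacent by construction. To upgrade adjacency to consecutivity I invoke the classical Farey estimate (see Chapter III of \cite{HardyWright}): any rational $\frac{e}{f}$ strictly between two adjacent fractions $\frac{a}{b} < \frac{c}{d}$ satisfies $f \ge b + d$. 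Hypothesis (3) then forbids any element of $\FQ$ from lying between $\frac{p_0}{q}$ and $\frac{p_0'}{q'}$, so they are consecutive. The reflection $(p, p') \mapsto (q - p, q' - p')$ produces a second consecutive pair with $\frac{p}{q} > \frac{p'}{q'}$, corresponding to the opposite sign of the adjacency identity.

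For the ``exactly two'' claim, suppose $(p_1, p_1')$ and $(p_2, p_2')$ are two solutions of $qp' - pq' = 1$ with both coordinates in the admissible ranges. Subtracting the two relations gives $q(p_1' - p_2') = q'(p_1 - p_2)$; coprimality of $q, q'$ forces $p_1 - p_2 = kq$ and $p_1' - p_2' = kq'$ for some $k \in \ZZ$, which the range constraints collapse to $k = 0$. The same argument applied with the sign convention $pq' - qp' = 1$ handles the second pair, and the pairs obtained for the two sign conventions are manifestly distinct. The main subtlety throughout is the treatment of boundary cases ($q$ or $q'$ equal to $1$, where one fraction degenerates to $0$ or $1$); these reduce to explicit checks, and condition (3) prevents the genuine degeneracy $q = q' = 1$ for any $Q \ge 2$.
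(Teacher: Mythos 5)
Your argument is correct. Note, though, that the paper never proves this lemma: it is stated as a known classification for Farey fractions (following from the Chapter 3 material of \cite{HardyWright} already cited), and the only counting argument the paper actually writes out is the analogous ``exactly four pairs'' claim for $\GS$ in Lemma \ref{lem:consec}. Your write-up therefore supplies a proof the paper omits, and it is the standard one: the forward direction is immediate from the paper's definition of consecutive (adjacent plus $q+q'>Q$); the backward direction uses Bezout plus normalisation $0\le p_0<q$, which does pin $p_0'$ into $[1,q']$; and your uniqueness step (subtracting two adjacency relations and using $(q,q')=1$ together with the range constraints) is exactly the real-case analogue of the congruence argument the paper uses for $\GS$, where $r\equiv us'^{-1} \bmod s$ determines the pair for each unit $u$. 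Two small remarks. First, with the paper's definition, consecutivity \emph{is} adjacency together with $q+q'>Q$, so your invocation of the estimate $f\ge b+d$ for a fraction strictly between two adjacent fractions is not needed for the equivalence as stated; it is harmless, and in fact it is what justifies the paper's remark that this definition coincides with the usual ordering-based meaning of consecutive, so it adds value rather than creating a gap. Second, in the degenerate case $q=q'=1$ (only possible when $Q=1$) your two sign conventions give the same unordered pair $\{0/1,\,1/1\}$, and the ``exactly two'' count survives only if one reads the conclusion as counting ordered assignments $(p_1,p_1')$ with $\frac{p_1}{q}<\frac{p_1'}{q'}$ versus $\frac{p_2}{q}>\frac{p_2'}{q'}$, which is the natural reading of the statement; you flag this boundary case, and your explicit-check disposal of it is fine.
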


In $\GS$, $s$ and $s'$ are called consecutive if they are denominators of two fractions which are consecutive.  Now that we know how to generate Farey fractions in $\mathbb{C}$, we can also classify what it means for two denominators to be consecutive in $\GS$.  In the case of the usual Farey fractions, the three requirements for $q, q' \in \ZZ$ to be consecutive can be thought of as
\begin{enumerate}
  \item The fractions are in $\FQ$. ($q, q' \leq Q$)
  \item The fractions are adjacent.  ($(q,q') = 1$)
  \item There is no smaller fraction between the original two which is also in $\FQ$.  ($q+q' > Q$)
\end{enumerate}
The classification for $\mathbb{C}$ should have conditions analogous to these statements, but take into account that we are now taking complex mediants.  Such a classification is given below.

\begin{lem}\label{lem:consec} Two Gaussian integers $s$ and $s'$ appear as consecutive denominators in $\GS$ if and only if all of the following are satisfied:
\begin{enumerate}
  \item $\abs{s}, \abs{s'} \leq S$,
  \item $(s,s') = 1$, and
  \item $\abs{s'+u's} > S$ for some unit $u'$.
\end{enumerate}
Furthermore, there are exactly four distinct pairs $r,r' \in \ZZ[i]$ which give consecutive fractions $\frac{r}{s}, \frac{r'}{s'}$, when these three conditions are satisfied.
\end{lem}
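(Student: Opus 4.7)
My plan is to reduce the geometric definition of consecutive in $\GS$ to the three algebraic conditions by exploiting the structure of fractions whose Ford spheres are tangent to both spheres $R$ and $R'$. First I would establish the following structural fact: given adjacent $\frac{r}{s}, \frac{r'}{s'}$ with $r's - rs' = u_0 \in \{\pm 1, \pm i\}$, the fractions in $\mathbb{Q}[i]$ whose Ford spheres are tangent to both are \emph{exactly} the four complex mediants $\frac{r+ur'}{s+us'}$ for $u \in \{\pm 1, \pm i\}$. I would prove this by writing any such $\frac{a}{b}$ as the solution of the linear system $as - br = u_1$, $as' - br' = u_2$, applying Cramer's rule to the $2\times 2$ matrix of determinant $-u_0$, and observing that the 16 choices of $(u_1, u_2)$ reduce to 4 distinct fractions under the equivalence $(a, b) \sim (va, vb)$ for units $v$. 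Crucially, the denominators of these four mediants have moduli $|s + us'|$, so the tangent sphere has radius $< \frac{1}{2S^2}$ if and only if $|s+us'| > S$.

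For necessity, if $\frac{r}{s}, \frac{r'}{s'}$ are consecutive in $\GS$, condition 1 is immediate from $\GS$-membership, condition 2 follows from adjacency (since $|r's-rs'|=1$ forces any common divisor of $s, s'$ to be a unit), and condition 3 is obtained by applying the structural fact to the smaller tangent sphere guaranteed by the definition of consecutive: it must be one of the four mediants, giving $|s+us'|>S$, equivalently $|s'+u^{-1}s|>S$, for the corresponding unit.

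For sufficiency, given conditions 1--3, B\'ezout (from $(s,s')=1$) produces $r_0, r_0'$ with $r_0' s - r_0 s' = 1$. The full solution set $(r_0 + ks, r_0' + ks')$ for $k \in \ZZ[i]$ translates both fractions in $\CC$ by the common vector $k$. Since $\left|\frac{r_0}{s} - \frac{r_0'}{s'}\right| = \frac{1}{|s||s'|} \leq 1$, I would choose $k$ (possibly after also varying the adjacency unit so that the displacement $u_0/(ss')$ points inward) to place both fractions in $I_2$. Condition 1 then puts $\frac{r}{s}, \frac{r'}{s'}$ in $\GS$, adjacency is automatic, and the mediant $\frac{r+u'r'}{s+u's'}$ supplied by condition 3 has denominator of modulus greater than $S$, hence tangent sphere of radius $<\frac{1}{2S^2}$, certifying consecutivity.

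For the count of exactly four pairs, each adjacency value $u_0 \in \{\pm 1, \pm i\}$ yields one pair $(r, r') \in \ZZ[i]^2$ after the shift into $I_2$, and distinct $u_0$'s give distinct pairs since $r's-rs'$ is an invariant of the pair. The main obstacle I anticipate is the placement step in the sufficiency and counting arguments: I must ensure that for each of the four $u_0$'s a simultaneous shift into $I_2$ really does exist, particularly when one of the fractions sits close to $\partial I_2$. Here I expect to need the mirror-image trick already employed in Lemma \ref{lem:EveryCFF}, together with a careful analysis of the four possible displacement directions of $u_0/(ss')$, to guarantee that all four adjacency units produce a legitimate pair.
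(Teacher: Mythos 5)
Your structural fact is correct and is worth having: applying Cramer's rule to the system $as-br=u_1$, $as'-br'=u_2$ (whose determinant is a unit) does show that every fraction whose sphere is tangent to both $R$ and $R'$ is one of the four mediants $\frac{r+ur'}{s+us'}$, and this is the honest way to obtain necessity of condition 3, since $\abs{s+us'}=\abs{s'+u^{-1}s}$. Here you actually do more than the paper, which simply asserts that the three conditions ``follow directly from the geometric definition'' and proves only the distinctness of the four pairs; your distinctness observation (the unit $r's-rs'$ is an invariant of the pair) is essentially the paper's argument that $r\equiv u(s')^{-1} \bmod s$ and $r'$ is determined by $r$.

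The genuine gap is the placement step you defer, and it carries both the sufficiency direction and the word ``exactly'' in the count: for each adjacency unit $u_0$ you must show there is one and only one $k\in\ZZ[i]$ for which $\frac{r_0+ks}{s}$ and $\frac{r_0'+ks'}{s'}$ both lie in $I_2$. Neither rescue you mention works as stated. The mirror-image trick of Lemma \ref{lem:EveryCFF} reflects a fraction across $\partial I_2$, which replaces its denominator by a unit multiple of its complex conjugate, so it does not produce a pair with the prescribed denominators $s,s'$. Varying the adjacency unit cannot repair a failure of existence either: the rotation $z\mapsto iz+1$ of $I_2$ about its centre maps pairs with $rs'-r's=u_0$ bijectively to pairs with determinant $iu_0$, so all four units admit exactly the same number of admissible shifts. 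Moreover uniqueness genuinely fails at the boundary: for $S=1$, $s=1$, $s'=i$ the three conditions hold, yet for $u_0=1$ both $(r,r')=(0,-1)$ (fractions $0$ and $i$) and $(r,r')=(1,i-1)$ (fractions $1$ and $1+i$) give consecutive fractions in the closed unit square, so one finds eight pairs rather than four. What has to be supplied is precisely the lattice-point analysis you anticipate --- the admissible positions of $\frac{r}{s}$ form $I_2\cap(I_2-\delta)$, where $\delta$ is the displacement of modulus $\frac{1}{\abs{s}\abs{s'}}$ determined by $u_0$, a rectangle with side lengths $1-\abs{\operatorname{Re}\delta}$ and $1-\abs{\operatorname{Im}\delta}$ to be intersected with a coset of $\ZZ[i]$ --- together with a convention for fractions on $\partial I_2$ (or an exclusion of such degenerate cases). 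Until that is done the proposal does not yet prove the lemma, although you should know that the paper's own proof leaves the same point implicit.
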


The three conditions follow directly from the geometric definition of consecutive in $\GS$, which we previously described.  The final statement is proved as follows.

\begin{proof}
Suppose $s,s'\in \ZZ[i]$ satisfy the three conditions above.  Then there are $r,r' \in \ZZ[i]$ for which $\frac{r}{s}$ and $\frac{r'}{s'}$ are consecutive in $\GS$.  So for $r,r'$ we have
\begin{equation*}
    rs'-r's = u
\end{equation*}
where $u$ is a unit in $\ZZ[i]$.  There are four choices for the unit u, each of which corresponds to a pair $r,r'$. We claim that each of these four pairs is distinct.  We have
\begin{align*}
    r &= us'^{-1} \mod{s}, and \\
    r' &= \frac{rs' - u}{s},
\end{align*}
so $r'$ is determined by $r$.  When the unit $u$ is changed either $r$ is changed, or $r$ remains the same and so $r'$ is changed.  Either way a new pair is found for each choice of $u$, and so there are four distinct possibilities for the pair $r,r'$.
\end{proof}

\subsection{Counting Consecutive Denominators} \label{sec:countdenoms}

To estimate $\mathcal{M}_{k,I_2}(S)$ it will be necessary, given $s\in \ZZp$ with $\abs{s} \leq S$, to count how many different $s'\in \ZZp$ have at least one fraction $\frac{r'}{s'} \in \GS$ which is consecutive to a fraction with denominator $s$.  In other words, given $s$, how many $s'\in \ZZp$ satisfy the three conditions of Lemma \ref{lem:consec}?

Ignoring the coprimality condition for now, we need to know how many $s'$ satisfy
\begin{enumerate}
  \item $|s'| \leq S$, and
  \item $|s'+us| > S$ for some unit $u$,
\end{enumerate}
for a given $s$.
The $s'$ satisfying condition 1 are those points on the $\ZZ[i]$ lattice that lie inside $R$, the circle of radius $S$ centred on the origin.  In condition 2 we consider mediants of $s'$ with $s$, taking $s'+us$ for each unit $u \in \{\pm 1, \pm i\}$.  For $s'$ to satisfy condition 2, one of these four points must lie outside of $R$.  We can look at this condition in another way by translating $R$.  For example, consider $s'$ for which $\abs{s+s'} > S$, so $s+s'$ lies outside of $R$.  Then if we translate $R$ by $-s$, the point $s'$ will lie outside of the translated circle.  Similarly, if $s'$ has $\abs{s'+us}>S$, $s'+us$ lies outside of $R$ and so $s'$ lies outside the circle of radius $S$ centred at $-us$.

Translating the circle $R$ in each of the four directions $s$, $-s$, $is$ and $-is$, we have the picture in Figure \ref{fig:regions}.  Points $s'$ satisfy the two conditions above iff they lie inside the red circle $R$ and outside at least one of the blue circles, i.e. in the shaded area.  Our aim then is to count the points on the $\ZZ[i]$ lattice in this region that are coprime to $s$.

\begin{figure}
    \centering
    \includegraphics[width=9cm]{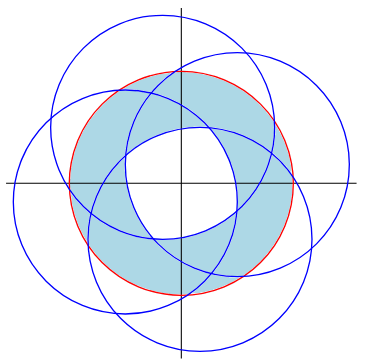}
    \caption{Circles of radius $S$ with centres the origin, $\pm s$ and $\pm is$.  To be consecutive to $s$, denominators $s'$ must lie in the shaded region.}
    \label{fig:regions}
\end{figure}

We will denote the shaded region in this diagram by $\Omega$ and its boundary by $\partial \Omega$.  The following theorem concerns any region $\Omega$ in the complex plane, but holds for our region $\Omega$ from Figure \ref{fig:regions} in particular.

\begin{thm}For a region $\Omega$ in the complex plane with boundary $\partial \Omega$, we have
\begin{equation*}
\centering
  \sum\limits_{\substack{z \in \Omega \\ (z,s)=1}} 1 = \dfrac{\phi_i(s)}{\abs{s}^2} \abs{\Omega} + \mathcal{O}_{\epsilon} \left( \abs{\partial \Omega} \abs{s}^{\epsilon} \right)
\end{equation*}
for all $\epsilon > 0$.
\label{thm:latticepointcount}
\end{thm}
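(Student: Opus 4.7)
The plan is to reduce the coprimality condition via Möbius inversion and then apply a standard lattice-point count over each dilated sublattice $d\ZZ[i]$ for $d\mid s$. Writing
\begin{equation*}
\sum\limits_{\substack{z \in \Omega \\ (z,s)=1}} 1 \;=\; \sum\limits_{z \in \Omega} \sum\limits_{\substack{d \mid s \\ d \mid z}} \mu_i(d) \;=\; \sum\limits_{\substack{d \in \ZZp \\ d \mid s}} \mu_i(d) \, N_d(\Omega),
\end{equation*}
where $N_d(\Omega) := \#\{z \in \Omega : d\mid z\}$, I push the problem down to estimating $N_d(\Omega)$ and then recognising the resulting main term using Lemma~\ref{lem:phii}.

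For the inner count $N_d(\Omega)$, the set of $z \in \ZZ[i]$ with $d\mid z$ is exactly the sublattice $d\ZZ[i]$, whose fundamental domain is a square of side $\abs{d}$ and area $\abs{d}^2$. A standard argument, assigning to each lattice point its fundamental cell, gives
\begin{equation*}
N_d(\Omega) \;=\; \frac{\abs{\Omega}}{\abs{d}^2} + \bgoh{\frac{\abs{\partial \Omega}}{\abs{d}} + 1},
\end{equation*}
because only those cells intersecting $\partial \Omega$ can contribute to the error, and the number of such cells is controlled by covering $\partial\Omega$ by discs of diameter comparable to $\abs{d}$.

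Substituting this estimate and separating main and error terms yields
\begin{equation*}
\sum\limits_{\substack{z \in \Omega \\ (z,s)=1}} 1 \;=\; \abs{\Omega} \sum\limits_{d \mid s} \frac{\mu_i(d)}{\abs{d}^2} + \bgoh{\abs{\partial \Omega} \sum\limits_{d \mid s} \frac{1}{\abs{d}} + \sum\limits_{d \mid s} 1}.
\end{equation*}
The main term equals $\frac{\phi_i(s)}{\abs{s}^2} \abs{\Omega}$ by Lemma~\ref{lem:phii}, which is exactly the asserted leading term.

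The remaining step, and the one I expect to be the main obstacle, is showing that both $\sum_{d\mid s} 1$ and $\sum_{d\mid s} \abs{d}^{-1}$ are $\bigoh_\epsilon(\abs{s}^{\epsilon})$. This is the Gaussian-integer analogue of the classical divisor bound $d(n) = \bigoh_\epsilon(n^\epsilon)$, and it follows by the usual argument: $d_i(s)$ is multiplicative in the sense of \eqref{eqn:gint}, and on each prime power $p^\alpha$ one has $d_i(p^\alpha) = \alpha+1 \ll_\epsilon \abs{p}^{2\alpha\epsilon}$ once $\abs{p}$ is bounded away from $1$, while only finitely many Gaussian primes have $\abs{p}^{2\epsilon}<2$, and their contribution is bounded. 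With this bound in hand, the error term becomes $\bgoh{(\abs{\partial\Omega}+1)\abs{s}^\epsilon}$, which absorbs into $\bgoh{\abs{\partial \Omega}\abs{s}^\epsilon}$ for any region with nontrivial boundary, completing the proof.
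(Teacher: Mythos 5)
Your proposal is correct and follows the same skeleton as the paper's proof: M\"obius inversion over $d \mid s$, the lattice-point count $\#\{z \in \Omega : d \mid z\} = \abs{\Omega}/\abs{d}^2 + \bgoh{\abs{\partial\Omega}/\abs{d}}$, and Lemma \ref{lem:phii} to identify the main term $\phi_i(s)\abs{\Omega}/\abs{s}^2$. The genuine difference is the final step. You bound the error via $\abs{\partial\Omega}\sum_{d\mid s}\abs{d}^{-1} \leq \abs{\partial\Omega}\sum_{d\mid s} 1$ and appeal to the Gaussian analogue of the classical divisor bound, proved by the standard multiplicativity argument over prime powers; the paper instead keeps the factor $\abs{\mu_i(d)}$, writes $\sum_{d\mid s}\abs{\mu_i(d)}/\abs{d} = \prod_{p\mid s}\left(1+1/\abs{p}\right)$, and estimates $\sum_{p\mid s}1/\abs{p}$ in the worst case (when $s$ is a product of the smallest primes) using the Prime Number Theorem for Gaussian primes, arriving at the stronger bound $\exp\bgoh{(\log\abs{s})^{1/2}/\log\log\abs{s}}$, of which only the consequence $\ll_{\epsilon}\abs{s}^{\epsilon}$ is used. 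Your route is more elementary (no PNT input) and fully sufficient for the theorem as stated; the paper's route buys a sharper sub-power bound that is never actually exploited. A minor point in your favour: your inclusion of the $+1$ in the cell-counting error, absorbed using $\abs{\partial\Omega}\gg 1$, is slightly more careful than the paper's $\bgoh{\abs{\partial\Omega}/\abs{d}}$, and is harmless for the region $\Omega$ actually used in Section 5, where $\abs{\partial\Omega}\asymp S$.
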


\begin{proof} By Lemma \ref{lem:phii}, we have
\begin{align*}
  \sum\limits_{\substack{z \in \Omega \\ (z,s)=1}} 1 &= \sum\limits_{z \in \Omega} \sum\limits_{d | z,s} \mu_i(d) \\
  &= \sum\limits_{d|s} \mu_i(d) \sum\limits_{\substack{z \in \Omega \\ d|z }} 1 \\
  &= \sum\limits_{d|s} \mu_i(d) \left( \dfrac{\abs{\Omega}}{\abs{d}^2} + \mathcal{O}\left(\dfrac{\abs{\partial \Omega}}{\abs{d}} \right)\right) \\
  &= \dfrac{\phi_i(s)}{\abs{s}^2}\abs{\Omega} + \mathcal{O}\left(\abs{\partial \Omega} \sum\limits_{d|s} \dfrac{\abs{\mu_i(d)}}{\abs{d}} \right).
\end{align*}

Now, using Lemma \ref{lem:multiplicative}, we have that
\begin{equation*}
 \sum\limits_{d|s} \frac{\abs{\mu_i(d)}}{\abs{d}} = \prod_{p|s} \left( 1 + \frac{1}{\abs{p}}\right),
\end{equation*}
and we also observe that
\begin{equation*}
  \log\left(\prod_{p|s} \left( 1 + \frac{1}{\abs{p}}\right)\right) \ll \sum\limits_{p|s} \frac{1}{\abs{p}},
\end{equation*}
where $p$ is a Gaussian prime.  Therefore, the worst case scenario is when $s$ is the product of the smallest possible distinct primes (replacing any such prime with a larger prime will reduce the value of the sum).  So we have
\begin{equation*}
  \centering
  \sum\limits_{p|s} \frac{1}{\abs{p}} \leq \sum\limits_{\abs{p}^2 \leq X} \frac{1}{\abs{p}}
\end{equation*}
for some $X$ depending on $s$.

To estimate $X$, first note that
\begin{align*}
  \abs{s} &= \prod\limits_{\abs{p}^2 \leq X} \abs{p} \text{, and} \\
  \log{\prod\limits_{\abs{p}^2 \leq X} \abs{p}} &= \sum\limits_{\abs{p}^2 \leq X} \log{\abs{p}}.
\end{align*}
Now, using Stieltjes integration and the Prime Number Theorem for Gaussian primes (Proposition 7.17 in \cite{PNT}),
\begin{align*}
  \sum\limits_{\abs{p}^2 \leq X} \log{\abs{p}} &= \displaystyle \int_{1^-}^{X^+} \! (\log{t^{\frac{1}{2}}}) \, \mathrm{d}\pi_i(t) \\
  &= \dfrac{(\log{X})\pi_i(X)}{2} - \displaystyle \int_{1}^{X} \! \dfrac{\pi_i(t)}{2t} \, \mathrm{d}t \\
  &= \dfrac{X}{2} + o(X),
\end{align*}
where $\pi_i(t)$ is the prime counting function for Gaussian integers, which counts Gaussian primes $p$ with $\abs{p}^2 \leq t$.  So
\begin{equation*}
\centering
    \prod\limits_{\abs{p}^2 \leq X} \abs{p} = e^{\frac{X}{2} + o(X)}.
\end{equation*}
If $X \gg \log{\abs{s}}$, this says that $\abs{s} = \prod\limits_{\abs{p}^2 \leq X} \abs{p} \gg \abs{s}$, so we must have $X \leq c\log{\abs{s}}$, for some $c>0$.
So using Stieltjes integration and the Prime Number Theorem for Gaussian primes again, we have
\begin{align*}
  \sum\limits_{p|s} \dfrac{1}{\abs{p}} & \leq \sum\limits_{\abs{p}^2 \leq c\log{\abs{s}}} \dfrac{1}{\abs{p}} \\
  &= \displaystyle \int_{1^-}^{(c\log{\abs{s}})^+} \! t^{-\frac{1}{2}} \, \mathrm{d}\pi_i(t) \\
  &= \dfrac{\pi_i(c\log{\abs{s}})}{c^{\frac{1}{2}}(\log{\abs{s}})^{\frac{1}{2}}} + \dfrac{1}{2} \displaystyle \int_{1}^{c\log{\abs{s}}} \! \dfrac{\pi_i(t)}{t^{\frac{3}{2}}} \, \mathrm{d}t \\
  &\ll \dfrac{(\log{\abs{s}})^{\frac{1}{2}}}{\log({\log{\abs{s}}})}
\end{align*}
Thus, we have
\begin{align*}
  \sum\limits_{d|s} \dfrac{\abs{\mu_i(d)}}{\abs{d}} &= \exp\left(\mathcal{O}\left(\sum\limits_{p|s} \dfrac{1}{\abs{p}} \right)\right) \\
  &= \exp\left(\mathcal{O}\left(\dfrac{(\log{\abs{s}})^{\frac{1}{2}}}{\log{(\log{\abs{s}})}}\right)\right) \\
  &\ll_{\epsilon} \abs{s}^{\epsilon}
\end{align*}
for all $\epsilon > 0$.
\end{proof}

We now show that in this estimation the main term will always be asymptotically larger than the error term when $\Omega$ is the shaded region region in Figure \ref{fig:regions}.  In this case the sum in Theorem \ref{thm:latticepointcount} is counting points $s'$ which satisfy all three conditions for being consecutive to $s$ and so is equal to the inner sum in \eqref{eqn:firstmoment1}.  The following argument uses only the area of the region and the length of its boundary, not its position.  So to simplify the calculations we rotate our view of the diagram so that the circles' centres lie on the axes, as shown in Figure \ref{fig:rotated}.

\begin{figure}
    \centering
    \includegraphics[width=8cm]{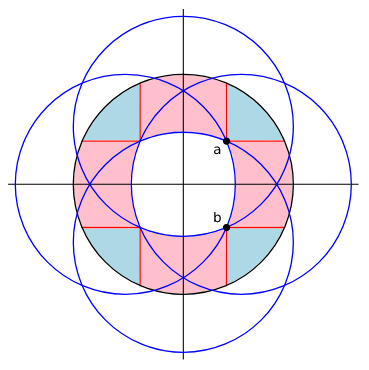}
    \caption{The rotated view of figure \ref{fig:regions}.  The shaded area is $\Omega$. }
    \label{fig:rotated}
\end{figure}

We call the top right blue corner region $C$ and the right hand red region $A$.  Clearly, $Area(\Omega) = 4(Area(A) + Area(C))$ and $\abs{\partial \Omega} \ll S$.
Let $A_h$ be the height of $A$.  This will be given by the difference in the imaginary parts of the points $a$ and $b$.  The symmetry of the diagram means that $Im(b)=-Im(a)$ and so their difference is $2Im(a)$.

Now, the circles with intersection point $a$ have equations
\begin{align*}
 y^2+(x+\abs{s})^2 & = S^2 \text{ , and}  \\
    (y+\abs{s})^2 +x^2 & = S^2.
\end{align*}
The points of intersection of these two circles lie on the line $y=x$.  Substituting this into one of the equations gives us
\begin{equation*}
  \centering
  2y^2+2\abs{s}y+\abs{s}^2-S^2 = 0.
\end{equation*}
The point $a$ has positive imaginary part and so $Im(a)$ is the positive solution to this equation.
\begin{equation*}
    Im(a) =\frac{1}{2} \left(-\abs{s}+\sqrt{2S^2-\abs{s}^2}\right)
\end{equation*}
Thus, the height of $A$ is
\begin{align*}
    A_h &= 2Im(a) \\
        &= \sqrt{2S^2-\abs{s}^2}-\abs{s}\\
        &= \sqrt{2}S\left(\sqrt{1-\frac{\abs{s}^2}{2S^2}}-\frac{\abs{s}}{\sqrt{2}S}\right),
\end{align*}
and
\begin{align*}
Area(A)&= \abs{s} A_h \\
&= \sqrt{2}\abs{s}S\left(\sqrt{1-\frac{\abs{s}^2}{2S^2}}-\frac{\abs{s}}{\sqrt{2}S}\right).
\end{align*}
For the area of $C$ note that the two red lines at the edges of $C$ are two sides of a square of side length $\abs{s}$ which completely contains $C$.  Further, $C$ will always make up more than half of this square and so,
\begin{equation*}
  Area(C) \geq \frac{1}{2}\abs{s}^2.
\end{equation*}
Now, if $S \leq 2\abs{s}$, note that $Area(C) \geq \frac{\abs{s}^2}{2}$ so $Area(\Omega) \geq 2\abs{s}^2$, and $\abs{\partial \Omega}\abs{s}^{\epsilon} \ll \abs{s}^{1+\epsilon}$.  On the other hand, if $S > 2\abs{s}$, then $Area(\Omega) \geq 4Area(A) \geq  2(\sqrt{7}-1)S\abs{s}$, and $\abs{\partial \Omega}\abs{s}^{\epsilon} \ll S\abs{s}^{\epsilon}$. So for any choice of $S$ and $\abs{s}$, the error term is (on average over s) asymptotically smaller than the main term in Theorem \ref{thm:latticepointcount}.

\section{First Moment}

In this section we aim to prove Theorem \ref{thm:mainthm}.  In place of $\mathcal{M}_{k,I}(Q)$, we are looking at $k$-moments
\begin{equation*}
  M_{k,I_2}(S) = \sum\limits_{\substack{\frac{r}{s},\frac{r'}{s'}\in \GS \\ \text{consec}}} \left( \dfrac{1}{2\abs{s}^2} + \dfrac{1}{2\abs{s'}^2}\right)^k,
\end{equation*}
so the first moment for Ford spheres is
\begin{equation*}
  M_{1,I_2}(S) = \sum\limits_{\substack{\frac{r}{s},\frac{r'}{s'}\in \GS \\ \text{consec}}} \left( \dfrac{1}{2\abs{s}^2} + \dfrac{1}{2\abs{s'}^2}\right).
\end{equation*}
Due to Lemma \ref{lem:consec}, this can be rewritten as
\begin{equation*}
  M_{1,I_2}(S) = \dfrac{1}{2} \sum\limits_{\substack{s\in \ZZp\\ \abs{s} \leq S}} \sum\limits_{\substack{s'\in \ZZp\\ s' \text{ consec to } s }} 4 \left( \dfrac{1}{2\abs{s}^2} + \dfrac{1}{2\abs{s'}^2}\right)
\end{equation*}
where the factor of 4 comes from the lemma and the factor of $\frac{1}{2}$ means we are not counting both ``$s$ is consecutive to $s'$ '' and ``$s'$ is consecutive to $s$''.  Now, $s$ and $s'$ run though the same numbers so, for every $a\in \ZZp$, every time $s=a$ produces a term $\frac{1}{\abs{a}^2}$, $s'=a$ will produce another $\frac{1}{\abs{a}^2}$ term.  So, using Theorem \ref{thm:latticepointcount}, we can write
\begin{align}    \label{eqn:firstmoment1}
    M_{1,I_2}(S) &= 2 \sum\limits_{\substack{s\in \ZZp\\ \abs{s} \leq S}} \sum\limits_{\substack{s'\in \ZZp\\ s' \text{ consec to } s }} \dfrac{1}{\abs{s}^2} \nonumber \\
    &= 2 \sum\limits_{\substack{s\in \ZZp\\ \abs{s} \leq S}} \dfrac{1}{\abs{s}^2} \sum\limits_{\substack{s'\in \ZZp\\ s' \text{ consec to } s }} 1 \\
    &= 2 \sum\limits_{\substack{s\in \ZZp\\ \abs{s} \leq S}} \dfrac{1}{\abs{s}^2}\sum\limits_{\substack{z \in \Omega \\ (z,s)=1}} 1 \nonumber \\
    &= 2 \sum\limits_{\substack{s\in \ZZp\\ \abs{s} \leq S}} \dfrac{1}{\abs{s}^2} \left( \dfrac{\phi_i(s)}{\abs{s}^2} \abs{\Omega} + \mathcal{O}_{\epsilon} \left( \abs{\partial \Omega} \abs{s}^{\epsilon} \right) \right) \nonumber \\
    &= 2 \sum\limits_{\substack{s\in \ZZp\\ \abs{s} \leq S}} \dfrac{\phi_i(s)}{\abs{s}^4}\abs{\Omega} + \mathcal{O}_{\epsilon} \left(\sum\limits_{\substack{s\in \ZZp\\ \abs{s} \leq S}} \dfrac{1}{\abs{s}^{2-\epsilon}} \abs{\partial \Omega} \right) \nonumber \\
    &=: 2A + \bigoh_{\epsilon}(B),
    \label{eqn:firstmoment2}
\end{align}
for all $\epsilon > 0$, where $\Omega$ is the shaded region in figure \ref{fig:regions} and its boundary is $\partial \Omega$.  We now aim to estimate $A$ and $B$.

\subsection*{The Area of $\Omega$}

In order to estimate $A$ we need to know the area of our region, which will be calculated using polar coordinates.

\begin{prop} \label{prop:areaomega}
The area of the region $\Omega$ is given by
\begin{equation*}
\centering
  \abs{\Omega} = -2\abs{s}^2 + I_1
\end{equation*}
where $I_1=8S^2 \int_0^{\sin^{-1}\left(\frac{\abs{s}}{\sqrt{2}S}\right)} \! \cos^2{u} \, \mathrm{d}u$.
\end{prop}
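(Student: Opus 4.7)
The plan is to write $\Omega$ as the complement (within the central disk) of the intersection of all five disks in Figure \ref{fig:regions}. Denote by $R_0$ the disk of radius $S$ centred at the origin and by $R_u$, for $u \in \{\pm 1, \pm i\}$, the disk of radius $S$ centred at $-us$, so that $s' \in R_u$ iff $\abs{s' + us} \leq S$. Then the conditions on $s'$ translate to $s' \in R_0$ and $s' \notin R_u$ for at least one unit $u$, giving $\Omega = R_0 \setminus K$ with $K := R_0 \cap R_1 \cap R_{-1} \cap R_i \cap R_{-i}$. Hence $\abs{\Omega} = \pi S^2 - \abs{K}$, and the task reduces to computing $\abs{K}$.

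To compute $\abs{K}$ I would pass to polar coordinates in the rotated picture of Figure \ref{fig:rotated}, so that the four translated centres lie on the coordinate axes at $(\pm\abs{s},0)$ and $(0,\pm\abs{s})$. Each disk constraint translates, via the quadratic formula in $\rho$, into an upper bound $\rho \leq g(\theta)$; by the eight-fold dihedral symmetry of the configuration it suffices to work in the octant $\theta \in [0, \pi/4]$. On this octant I expect the binding constraint to be the one coming from the disk whose centre is opposite to the octant's direction, giving
\begin{equation*}
f(\theta) := -\abs{s}\cos\theta + \sqrt{S^2 - \abs{s}^2 \sin^2\theta},
\end{equation*}
with the bounds from the other three translated disks and from $R_0$ all strictly weaker. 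This yields
\begin{equation*}
\abs{K} = 8 \cdot \tfrac{1}{2}\int_{0}^{\pi/4} f(\theta)^2\, d\theta.
\end{equation*}

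Expanding $f(\theta)^2 = \abs{s}^2 \cos^2\theta + S^2 - \abs{s}^2 \sin^2\theta - 2\abs{s}\cos\theta\sqrt{S^2 - \abs{s}^2 \sin^2\theta}$, the three trigonometric terms integrate by elementary means. For the cross term, the substitution $u = \abs{s}\sin\theta$ converts its integral into a constant multiple of $\int_{0}^{\abs{s}/\sqrt{2}} \sqrt{S^2 - u^2}\, du$; a further substitution $u = S\sin v$ converts this into $S^2 \int_{0}^{\theta_0} \cos^2 v\, dv$ with $\theta_0 = \sin^{-1}\!\bigl(\abs{s}/(\sqrt{2}S)\bigr)$, which is precisely the integral appearing in $I_1$. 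Collecting all contributions I expect $\abs{K} = 2\abs{s}^2 + \pi S^2 - I_1$, whence $\abs{\Omega} = -2\abs{s}^2 + I_1$ as claimed.

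The one point requiring genuine care is justifying that the disk opposite to the octant gives the binding bound on $[0, \pi/4]$: at $\theta = 0$ its bound $S - \abs{s}$ is strictly smaller than the bound $\sqrt{S^2 - \abs{s}^2}$ from the next candidate; the two bounds coincide at $\theta = \pi/4$ by reflective symmetry across the diagonal; and the bounds from the three remaining disks and from $R_0$ are visibly weaker throughout, which one checks by a direct comparison of the radial functions. With this in hand, the rest is routine calculus.
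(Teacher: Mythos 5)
Your proposal is correct and is essentially the paper's own argument: both exploit the eight-fold symmetry of the configuration, work in polar coordinates on the octant $0\le\theta\le\pi/4$ with the same binding radial function $-\abs{s}\cos\theta+\left(S^2-\abs{s}^2\sin^2\theta\right)^{1/2}$, and use the same substitution to produce $I_1$; passing through the complement $K$ and subtracting from $\pi S^2$ is merely an algebraic rearrangement of the paper's direct evaluation of $8\int_0^{\pi/4}\int_{r_\theta}^{S} r\,\mathrm{d}r\,\mathrm{d}\theta$. Your explicit verification that the disk opposite the octant gives the binding constraint is a welcome detail that the paper leaves to the figure, but it does not change the route.
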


\begin{proof}

\begin{figure}
    \centering
    \includegraphics[width=8cm]{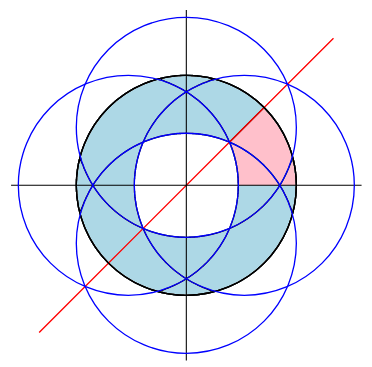}
    \caption{The area $\Omega$.  The pink area is one eighth of the whole shaded region.}
    \label{fig:areaofomega}
\end{figure}

To find the area of $\Omega$, consider the circles with equations $x^2+y^2=S^2$ and $(x+\abs{s})^2+y^2 = S^2$, and call them $C_1$ and $C_2$ respectively.  Then the region between these two circles, the line $y=x$ and the $x$-axis (as shown in Figure \ref{fig:areaofomega}) will be equal to $\frac{1}{8}\abs{\Omega}$.
Now, working in polar coordinates, $C_1$ and $C_2$ have equations $r=S$ and $r=-\abs{s}\cos{\theta} + (S^2-\abs{s}^2\sin^2{\theta})^{\frac{1}{2}}=:r_{\theta}$ respectively, so
\begin{equation*}
  \centering
  \abs{\Omega} = 8\int_0^{\frac{\pi}{4}} \int_{r_{\theta}}^S \! r \, \mathrm{d}r \, \mathrm{d}\theta.
\end{equation*}

We have
\begin{align*}
  \int_{r_{\theta}}^S \! r \, \mathrm{d}r &= \left[\frac{1}{2} r^2 \right]_{r_{\theta}}^S \\
   &= \frac{1}{2}S^2 - \frac{1}{2}\left(-\abs{s}\cos{\theta} + \left(S^2-\abs{s}^2\sin^2{\theta}\right)^{\frac{1}{2}}\right)^2 \\
   &= -\frac{1}{2}\abs{s}^2\left(\cos^2{\theta} - \sin^2{\theta}\right) + \abs{s}\cos{\theta}\left(S^2-\abs{s}^2\sin^2{\theta}\right)^{\frac{1}{2}} \\
   &= \abs{s}\cos{\theta}\left(S^2-\abs{s}^2\sin^2{\theta}\right)^{\frac{1}{2}} - \frac{1}{2}\abs{s}^2\cos{2\theta}.
\end{align*}
So, substituting this back into the integral for $\abs{\Omega}$, we have
\begin{align*}
  \abs{\Omega} &= 4\int_0^{\frac{\pi}{4}} \! 2\left(\abs{s}\cos{\theta}\left(S^2-\abs{s}^2\sin^2{\theta}\right)^{\frac{1}{2}} - \frac{1}{2}\abs{s}^2\cos{2\theta} \, \mathrm{d}\theta\right) \\
  &= -4\int_0^{\frac{\pi}{4}} \! \abs{s}^2\cos{2\theta} \, \mathrm{d}\theta + I_1 \\
  &= -4\left[\frac{1}{2}\abs{s}^2\sin{2\theta}\right]_0^{\frac{\pi}{4}} + I_1 \\
  &= -2\abs{s}^2 + I_1,
\end{align*}
where $I_1 = 8\int_0^{\frac{\pi}{4}} \! \abs{s}\cos{\theta}\left(S^2-\abs{s}^2\sin^2{\theta}\right)^{\frac{1}{2}} \, \mathrm{d}\theta$.  Now, into $I_1$ substitute $\sin{u}=\dfrac{\abs{s}}{S}\sin{\theta}$, so,
\begin{align*}
  I_1 &= 8S\abs{s}\int_0^{\sin^{-1}\left(\frac{\abs{s}}{\sqrt{2}S}\right)} \! \frac{S}{\abs{s}}\cos{u}\left(\cos^2{u}\right)^{\frac{1}{2}} \, \mathrm{d}u \\
  &= 8S^2 \int_0^{\sin^{-1}\left(\frac{\abs{s}}{\sqrt{2}S}\right)} \! \cos^2{u} \, \mathrm{d}u.
\end{align*}
\end{proof}

\subsection*{Estimating A}

We now aim to prove the following Proposition which gives an estimate for the sum $A$ associated with \eqref{eqn:firstmoment2}.  We start with Proposition \ref{prop:areaomega} and then use the lemmas from Section \ref{sec:prelimGI} to complete the proof.

\begin{prop} \label{prop:sumA}
For $A$ as defined in \eqref{eqn:firstmoment2},
  \begin{equation*}
  \centering
  A = \frac{\pi}{2}\zeta_i^{-1}(2)\left(8z_3-1\right)S^2 + \bgoh{S\ln{S}}.
\end{equation*}
\end{prop}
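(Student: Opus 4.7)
The plan is to split $A$ into two pieces via Proposition \ref{prop:areaomega}:
$$A \;=\; -2\sum_{\substack{s\in\mathbb{Z}[i]^+\\ \abs{s}\le S}} \frac{\phi_i(s)}{\abs{s}^2} \;+\; \sum_{\substack{s\in\mathbb{Z}[i]^+\\ \abs{s}\le S}} \frac{\phi_i(s)\,I_1(\abs{s})}{\abs{s}^4}.$$
To each piece I will apply Abel summation in its Stieltjes form (Lemma \ref{lem:abel}) with counting function $\Phi(T):=\sum_{\abs{s}\le T}\phi_i(s) = \tfrac{\pi}{8}\zeta_i^{-1}(2) T^4 + \bigoh(T^{2+2\kappa})$ from Lemma \ref{lem:sumphii}. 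Writing $\Phi(t)=\tfrac{\pi}{8}\zeta_i^{-1}(2)t^4 + E(t)$ and performing one integration by parts, the main term of $\sum_{\abs{s}\le S}\phi_i(s)\,g(\abs{s})$ collapses to $\tfrac{\pi}{2}\zeta_i^{-1}(2)\int_1^{S} t^{3} g(t)\,dt$, with boundary/error contributions $E(S) g(S)$ and $\int_1^S E(t)g'(t)\,dt$.

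For the first piece, taking $g(t)=-2/t^2$ gives $\int_1^S t^3 g(t)\,dt = -S^2+\bigoh(1)$, and since $g'(t)$ is $O(t^{-3})$ the error is $\bigoh(S^{2\kappa})$; this yields a contribution of $-\tfrac{\pi}{2}\zeta_i^{-1}(2)S^2$. For the second piece, with $g(t)=I_1(t)/t^4$, the main-term integral is
$$\int_{0}^{S}\frac{I_1(t)}{t}\,dt \;=\; 8S^{2}\int_{0}^{S}\frac{1}{t}\int_{0}^{\sin^{-1}(t/(\sqrt{2}S))}\cos^{2}u\,du\,dt.$$
The key manipulation is to reverse the order of integration: for fixed $u\in[0,\pi/4]$ the $t$-range becomes $[\sqrt{2}S\sin u,\,S]$, so the inner integral evaluates to $-\ln(\sqrt{2}\sin u)$. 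The substitution $w=\sqrt{2}\sin u$ followed by $w\mapsto\sqrt{2}w$ identifies the remaining trigonometric integral precisely with the constant $z_3$ (the same $C$ appearing in Theorem \ref{thm:mainthm}), giving $\int_0^S I_1(t)/t\,dt = 8 z_3 S^2$. Multiplying by $\tfrac{\pi}{2}\zeta_i^{-1}(2)$ produces $4\pi\zeta_i^{-1}(2)z_3 S^2$.

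Combining the two pieces yields the claimed main term $\tfrac{\pi}{2}\zeta_i^{-1}(2)(8z_3 - 1)S^2$. The principal obstacle will be bounding $\int_1^S E(t) g'(t)\,dt$ for the $I_1$ piece: a direct expansion of $g$ shows $g(t)\sim 4\sqrt{2}\,S/t^{3}$ as $t\to 0$, so $g'(t)$ blows up like $S/t^4$, giving an integrand of size $O\!\left(S\, t^{2\kappa-2}\right)$. Because $\kappa<\tfrac12$ one has $\int_1^\infty t^{2\kappa-2}\,dt<\infty$, so this error is $\bigoh(S)$, which is absorbed into the claimed $\bigoh(S\ln S)$ bound. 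A similar check on the boundary terms $E(S)g(S)=\bigoh(S^{2\kappa})$ and on the difference between $\int_0^S$ and $\int_1^S$ in the main integral (contributing only $\bigoh(S)$ since $I_1(t)/t$ is bounded near the origin) completes the estimation.
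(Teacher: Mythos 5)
Your argument is correct and reaches the stated main term, but it executes the key estimate differently from the paper. The paper also splits $A$ using Proposition \ref{prop:areaomega}, but then proceeds in two stages: it first derives $\sum_{\abs{s}\le S}\phi_i(s)\abs{s}^{-4}=4z_1\ln S+(z_1+z_2)+\bigoh(S^{2\kappa-2})$ (introducing an auxiliary constant $z_2$), and then applies Abel summation a second time with $f(t)=I_1$, computing $f'$ explicitly and evaluating two log-weighted integrals $X_1,X_2$ by trigonometric substitution; the final answer $32z_1z_3S^2$ emerges only after the $S^2\ln S$ terms and all $z_2$-dependence cancel. You instead perform a single summation by parts against the counting function $\Phi(T)=\sum_{\abs{s}\le T}\phi_i(s)$, so the main term collapses to $\tfrac{\pi}{2}\zeta_i^{-1}(2)\int_1^S I_1(t)\,t^{-1}\,dt$, and you evaluate this exactly by interchanging the order of integration, which produces $8z_3S^2$ directly — no $z_2$, no cancellation bookkeeping, and in fact an error term $\bigoh(S)$, slightly sharper than the paper's $\bigoh(S\ln S)$ (your error sources $E(S)g(S)=\bigoh(S^{2\kappa})$, $\int_1^S E(t)g'(t)\,dt=\bigoh(S)$ via $g'(t)\ll S t^{-4}$ and $2\kappa-2<-1$, and the truncation $\int_0^1 I_1(t)t^{-1}\,dt=\bigoh(S)$ all check out). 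Two small points to make explicit in a write-up: the partial summation also produces a boundary contribution at $t=1$ (of size $\bigoh(S)$ for the $I_1$ piece, since $g(1)\ll S$), which should be recorded even though it is harmless; and you use Lemma \ref{lem:sumphii} at real arguments $T$, which is legitimate because its proof works verbatim for real $T$, but is worth a sentence since the lemma is stated for integers.
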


\begin{proof}
We begin by substituting our value for the area of $\Omega$ into $A$, which gives us
\begin{equation*}
  \centering
  A = \sum\limits_{\abs{s} \leq S} \dfrac{\phi_i(s)}{\abs{s}^4}I_1 - 2 \sum\limits_{\abs{s}\leq S}\dfrac{\phi_i(s)}{\abs{s}^2}.
\end{equation*}

We focus first on the second sum, using Lemmas \ref{lem:phii}, \ref{lem:sumr2} and \ref{lem:sumq>Q},
\begin{align}\label{eqn:A2}
  \sum\limits_{\abs{s}\leq S}\dfrac{\phi_i(s)}{\abs{s}^2} &= \sum\limits_{\substack{s\in \ZZp\\ \abs{s} \leq S}} \sum\limits_{\substack{d\in \ZZp\\ d|s}} \dfrac{\mu_i(d)}{\abs{d}^2} \nonumber \\
  &= \sum\limits_{\substack{d\in \ZZp\\ \abs{d} \leq S}} \dfrac{\mu_i(d)}{\abs{d}^2} \sum\limits_{\substack{s'\in \ZZp\\ \abs{s'}\leq \frac{S}{\abs{d}}}} 1 \nonumber \\
  &= \sum\limits_{\substack{d\in \ZZp\\ \abs{d} \leq S}} \dfrac{\mu_i(d)}{\abs{d}^2} \sum\limits_{k \leq \frac{S^2}{\abs{d}^2}} \dfrac{r_2(k)}{4} \nonumber \\
  &= \dfrac{1}{4}\sum\limits_{\substack{d\in \ZZp\\ \abs{d} \leq S}} \dfrac{\mu_i(d)}{\abs{d}^2} \left(\dfrac{\pi S^2}{\abs{d}^2} + \bigoh\left(\dfrac{S^{2\kappa}}{\abs{d}^{2\kappa}}\right)\right) \nonumber \\
  &= \dfrac{\pi}{4} S^2 \sum\limits_{\substack{d\in \ZZp\\ \abs{d} \leq S}} \dfrac{\mu_i(d)}{\abs{d}^4} + \bigoh\left(S^{2\kappa}\sum\limits_{\substack{d\in \ZZp\\ \abs{d} \leq S}} \dfrac{\mu_i(d)}{\abs{d}^{2+2\kappa}} \right) \nonumber \\
  &= \dfrac{\pi}{4}S^2\left(\zeta_i^{-1}(2)+\bigoh\left(\dfrac{1}{S^2}\right)\right) + \bigoh\left( S^{2\kappa}\left( \zeta_i^{-1}(1+\kappa) + \bigoh\left( \dfrac{1}{S^{\kappa}} \right) \right) \right) \nonumber \\
  &= \dfrac{\pi}{4}\zeta_i^{-1}(2)S^2 + \bigoh(S^{2\kappa}),
  \end{align}
where $\frac{1}{4}<\kappa<\frac{1}{2}$.

Now, before moving on to the first part of $A$, consider the sum $\sum\limits_{\abs{s}\leq S} \frac{\phi_i(s)}{\abs{s}^4}$ and apply Abel's Summation Formula with $x=S^4$, $f(t)=\frac{1}{t}$ and $a(n)=\sum\limits_{\abs{s}=n^{\frac{1}{4}}}\phi_i(s)$.  Then $A(t)=\sum\limits_{\abs{s}\leq t^{\frac{1}{4}}}\phi_i(s)$ and so, using Lemma \ref{lem:sumphii},
\begin{align*}
  \sum\limits_{\abs{s}\leq S} \frac{\phi_i(s)}{\abs{s}^4} &= \sum\limits_{n\leq S^4} \frac{a(n)}{n} \\
  &= \sum\limits_{\abs{s}\leq S}\phi_i(s)S^{-4} + \int_1^{S^4} \! A(t)t^{-2} \, \mathrm{d}t \\
  &= \left(z_1S^4 +\bgoh{S^{2+2\kappa}}\right)S^{-4} + \int_1^{S^4} \! \left(z_1t + \left(A(t)-z_1t\right)\right)t^{-2} \, \mathrm{d}t \\
  &= z_1 + \bgoh{S^{2\kappa-2}} + z_1 \int_1^{S^4} \! t^{-1} \, \mathrm{d}t + \int_1^{S^4} \! \left(A(t)-z_1t\right)t^{-2} \, \mathrm{d}t \\
  &= 4z_1\ln{S} + z_1 + \int_1^{\infty} \! \left(A(t)-z_1t\right)t^{-2} \, \mathrm{d}t - \int_{S^4}^{\infty} \! \left(A(t)-z_1t\right)t^{-2} \, \mathrm{d}t + \bgoh{S^{2\kappa-2}},
\end{align*}
where $z_1 = \frac{\pi}{8}\zeta_i^{-1}(2)$.
Define $z_2 := \int_1^{\infty} \! \left(A(t)-z_1t\right)t^{-2} \, \mathrm{d}t$ and note that it is absolutely convergent and so is well-defined.  Note also that we have
\begin{align*}
  \int_{S^4}^{\infty} \! \left(A(t)-z_1t\right)t^{-2} \, \mathrm{d}t & \ll \int_{S^4}^{\infty} \! \frac{1}{t^{\frac{3}{2}-\frac{\kappa}{2}}} \, \mathrm{d}t \\
  & \ll \left[t^{\frac{\kappa}{2} - \frac{1}{2}}\right]_{S^4}^{\infty} \\
  & \ll S^{2\kappa-2}.
\end{align*}
Thus,
\begin{equation}\label{eqn:sumphii4}
  \centering
  \sum\limits_{\abs{s}\leq S} \frac{\phi_i(s)}{\abs{s}^4} = 4z_1\ln{S} + \left(z_1 + z_2\right) + \bgoh{S^{2\kappa-2}}.
\end{equation}

Now, returning to $A$, we need to estimate $\sum\limits_{\abs{s}\leq S} \frac{\phi_i(s)}{\abs{s}^4}I_1$.  We apply Abel's Summation Formula with $x=S^4$, $f(t)=I_1$, and $a(n)=\sum\limits_{\abs{s}=n^{\frac{1}{4}}}\phi_i(s) n^{-1}$.  Then by \eqref{eqn:sumphii4},
\begin{equation*}
A(t)=\sum\limits_{\abs{s}\leq t^{\frac{1}{4}}}\frac{\phi_i(s)}{\abs{s}^4} = z_1\ln{t} + \left(z_1 + z_2 \right) +\bgoh{t^{\frac{\kappa}{2}-\frac{1}{2}}}.
\end{equation*}
Also, by the Fundamental Theorem of Calculus,
\begin{align*}
  f'(t) &= 8S^2\cos^2\left(\sin^{-1}\left(\frac{t^\frac{1}{4}}{\sqrt{2}S}\right)\right) \frac{d}{dt} \left(\sin^{-1}\left(\frac{t^{\frac{1}{4}}}{\sqrt{2}S}\right)\right) \\
  &= 8S^2\left(1-\frac{t^{\frac{1}{2}}}{2S^2}\right)\frac{t^{-\frac{3}{4}}}{4\sqrt{2}S} \left(1-\frac{t^{\frac{1}{2}}}{2S^2}\right)^{-\frac{1}{2}} \\
  &= \sqrt{2}St^{-\frac{3}{4}}\left(1-\frac{t^{\frac{1}{2}}}{2S^2}\right)^{\frac{1}{2}}.
\end{align*}
We then have,
\begin{align}\label{eqn:X1+X2}
  \int_{1}^{S^4} \! A(t)f'(t) \, \mathrm{d}t &= \int_{1}^{S^4} \! \sqrt{2}St^{-\frac{3}{4}} \left(z_1\ln{t} + \left(z_1 + z_2 \right) + \bgoh{t^{\frac{\kappa}{2}-\frac{1}{2}}}\right)\left(1-\frac{t^{\frac{1}{2}}}{2S^2}\right)^{\frac{1}{2}} \, \mathrm{d}t \nonumber \\
  &= \sqrt{2}z_1 S \int_1^{S^4} \! t^{-\frac{3}{4}}\ln{t} \left(1-\frac{t^{\frac{1}{2}}}{2S^2}\right)^{\frac{1}{2}} \, \mathrm{d}t \nonumber \\
  & \qquad+ \sqrt{2}\left(z_1+z_2\right)S\int_1^{S^4}t^{-\frac{3}{4}}\left(1-\frac{t^{\frac{1}{2}}}{2S^2}\right)^{\frac{1}{2}} \, \mathrm{d}t \nonumber \\
  & \qquad+ \bgoh{S\int_1^{S^4}t^{\frac{\kappa}{2}-\frac{5}{4}}\left(1-\frac{t^{\frac{1}{2}}}{2S^2}\right)^{\frac{1}{2}} \, \mathrm{d}t} \nonumber \\
  &= X_1+X_2+\bgoh{S}.
\end{align}

Now, substituting $\sin{\theta}=\frac{t^{\frac{1}{4}}}{\sqrt{2}S}$,
\begin{align*}
  \int_1^{S^4}t^{-\frac{3}{4}}\left(1-\frac{t^{\frac{1}{2}}}{2S^2}\right)^{\frac{1}{2}} \, \mathrm{d}t &= 4\sqrt{2}S\int_{\sin^{-1}(\frac{1}{\sqrt{2}S})}^{\frac{\pi}{4}} \! \cos^2{\theta} \, \mathrm{d}\theta \\
  &= \sqrt{2}S\left(\frac{\pi}{2}+1\right)+\bgoh{1}
\end{align*}
and so \begin{equation}\label{eqn:X2}
         \centering
         X_2 = (z_1+z_2)(\pi + 2)S^2+\bgoh{S}.
       \end{equation}

Finally, letting $u=\frac{t^{\frac{1}{4}}}{\sqrt{2}S}$,
\begin{align*}
  \int_1^{S^4} \! t^{-\frac{3}{4}}\ln{t} \left(1-\frac{t^{\frac{1}{2}}}{2S^2}\right)^{\frac{1}{2}} \, \mathrm{d}t &= 16\sqrt{2}S \int_{\frac{1}{\sqrt{2}S}}^{\frac{1}{\sqrt{2}}} \! \ln(\sqrt{2}uS) \left(1-u^2\right)^{\frac{1}{2}} \, \mathrm{d}u \\
  &= 16\sqrt{2}S\ln{S} \int_{\frac{1}{\sqrt{2}S}}^{\frac{1}{\sqrt{2}}} \! \left(1-u^2\right)^{\frac{1}{2}} \, \mathrm{d}u \\
  & \qquad+ 16\sqrt{2}S \int_{\frac{1}{\sqrt{2}S}}^{\frac{1}{\sqrt{2}}} \! \ln(\sqrt{2}u) \left(1-u^2\right)^{\frac{1}{2}} \, \mathrm{d}u \\
  &= 16\sqrt{2}S\ln{S} \left( \int_{0}^{\frac{1}{\sqrt{2}}} \! \left(1-u^2\right)^{\frac{1}{2}} \, \mathrm{d}u - \int_{0}^{\frac{1}{\sqrt{2}S}} \! \left(1-u^2\right)^{\frac{1}{2}} \, \mathrm{d}u \right)\\
  & \qquad+ 16\sqrt{2}S\int_{\frac{1}{\sqrt{2}S}}^{\frac{1}{\sqrt{2}}} \! \ln(\sqrt{2}u) \left(1-u^2\right)^{\frac{1}{2}} \, \mathrm{d}u  \\
  &= \sqrt{2}\left(2\pi +4\right)S\ln{S} + \bgoh{\ln{S}} \\
  & \qquad+ 16\sqrt{2}S\int_{0}^{\frac{1}{\sqrt{2}}} \! \ln(\sqrt{2}u) \left(1-u^2\right)^{\frac{1}{2}} \, \mathrm{d}u \\
  & \qquad- 16\sqrt{2}S\int_{0}^{\frac{1}{\sqrt{2}S}} \! \ln(\sqrt{2}u) \left(1-u^2\right)^{\frac{1}{2}} \, \mathrm{d}u \\
  &= \sqrt{2}\left(2\pi +4\right)S\ln{S} - 16\sqrt{2}z_3S + \bgoh{\ln{S}},
\end{align*}
where $z_3 = -\int_{0}^{\frac{1}{\sqrt{2}}} \! \ln(\sqrt{2}u) \left(1-u^2\right)^{\frac{1}{2}} \, \mathrm{d}u > 0$.  Thus,
\begin{equation}\label{eqn:X1}
  \centering
  X_1 = (4\pi +8)z_1S^2\ln{S} - 32z_1z_3S^2 + \bgoh{S\ln{S}}.
\end{equation}
Now \eqref{eqn:X1+X2}, \eqref{eqn:X2} and \eqref{eqn:X1} give us
\begin{align*}
  \sum\limits_{\abs{s}\leq S} \frac{\phi_i(s)}{\abs{s}^4}I_1 &= A(S^4)f(S^4)-\int_1^{S^4} \! A(t)f'(t) \, \mathrm{d}t \\
  &= \left(4z_1\ln{S} + (z_1+z_2) + \bgoh{S^{2\kappa -2}} \right)(\pi+2)S^2 - X_1 - X_2 + \bgoh{S} \\
  &= 32z_1z_3S^2 + \bgoh{S\ln{S}}.
\end{align*}
This, together with \eqref{eqn:A2}, gives our estimate for $A$,
\begin{equation*}
  \centering
  A = \frac{\pi}{2}\zeta_i^{-1}(2)\left(8z_3-1\right)S^2 + \bgoh{S\ln{S}}.
\end{equation*}
\end{proof}

\subsection*{Estimating B}

The last thing we need is an estimate for the sum $B$ associated with \eqref{eqn:firstmoment2}. This will be achieved by splitting the sum over dyadic annuli.

\begin{prop}
For $B$ as defined in \eqref{eqn:firstmoment2},
\begin{equation*}
  \centering
  B \ll S^{1+\epsilon}
\end{equation*}
for all $\epsilon >0$.
\end{prop}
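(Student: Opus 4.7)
The quantity to estimate is
\[
  B \;=\; \sum\limits_{\substack{s\in \ZZp\\ \abs{s} \leq S}} \dfrac{1}{\abs{s}^{2-\epsilon}}\, \abs{\partial \Omega}.
\]
The plan is first to pull out $\abs{\partial \Omega}$, which was bounded above by $S$ in the discussion following Theorem \ref{thm:latticepointcount} (the boundary of $\Omega$ consists of arcs lying on circles of radius $S$, and so has total length $\ll S$). After this step it remains to show that
\[
  \sum\limits_{\substack{s\in \ZZp\\ \abs{s} \leq S}} \dfrac{1}{\abs{s}^{2-\epsilon}} \;\ll_{\epsilon}\; S^{\epsilon}
\]
for every $\epsilon>0$, since then $B \ll S \cdot S^{\epsilon} = S^{1+\epsilon}$ after relabelling $\epsilon$.

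To establish the sum bound I would decompose the range $1 \leq \abs{s} \leq S$ dyadically, writing it as a union of annuli $2^k \leq \abs{s} < 2^{k+1}$ for $0 \leq k \leq \lceil \log_2 S\rceil$. On each such annulus the number of Gaussian integers in $\ZZp$ is controlled by the sum-of-squares estimate from Lemma \ref{lem:sumr2} (taking $a=0$), giving a count $\ll 2^{2k}$. On the same annulus every term of the sum is at most $(2^{k+1})^{\epsilon}/(2^k)^{2} \ll 2^{k\epsilon - 2k}$, so the dyadic contribution is $\ll 2^{k\epsilon}$.

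Summing over $k$ yields a geometric progression
\[
  \sum\limits_{k=0}^{\lceil \log_2 S\rceil} 2^{k\epsilon} \;\ll_{\epsilon}\; 2^{\epsilon \lceil \log_2 S\rceil} \;\ll_{\epsilon}\; S^{\epsilon},
\]
which is the required bound. Plugging this back with $\abs{\partial \Omega} \ll S$ gives $B \ll_{\epsilon} S^{1+\epsilon}$.

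I do not expect any real obstacle: the whole argument is just a careful dyadic decomposition together with the already established bounds on $\abs{\partial \Omega}$ and on the count of Gaussian lattice points in a disc. The only point requiring a small amount of care is absorbing the geometric sum $\sum_k 2^{k\epsilon}$ into $S^{\epsilon}$ with a constant depending on $\epsilon$, which is exactly why the exponent in the conclusion is $1+\epsilon$ rather than $1$.
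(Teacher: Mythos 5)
Your proposal is correct and follows essentially the same route as the paper: bound $\abs{\partial\Omega}\ll S$, split the remaining sum $\sum_{\abs{s}\le S}\abs{s}^{-(2-\epsilon)}$ into dyadic annuli with $\ll 2^{2k}$ Gaussian integers in each, and sum the resulting geometric progression $\sum_k 2^{k\epsilon}\ll_{\epsilon} S^{\epsilon}$.
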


\begin{proof}
  Clearly $\abs{\partial\Omega} \ll S$ so, substituting this into $B$,
 \begin{align*}
   B &= \sum\limits_{\abs{s}\leq S} \dfrac{\abs{\partial\Omega}}{\abs{s}^{2-\epsilon}} \\
   & \ll S \sum\limits_{\abs{s}\leq S} \dfrac{1}{\abs{s}^{2-\epsilon}} \\
   & \ll S \sum\limits_{k\leq \log_2{S}} \sum\limits_{2^{k-1} \leq \abs{s} < 2^k} \dfrac{1}{\abs{s}^{2-\epsilon}}\\
   & \ll S \sum\limits_{k \leq \log_2{S}} \dfrac{1}{2^{k(2-\epsilon)}} \sum\limits_{2^{k-1} \leq \abs{s} < 2^k} 1 \\
   & \ll S \sum\limits_{k \leq \log_2{S}} \left(2^k\right)^{\epsilon} \\
   & \ll S^{1+\epsilon},
 \end{align*}
using the fact that
\begin{equation*}
\sum\limits_{2^{k-1} \leq \abs{s} < 2^k} 1 \asymp 2^{2k}.
\end{equation*}
\end{proof}

Finally, putting together our estimates for A and B, we have
\begin{equation*}
  \centering
  M_{1,I_2}(S) = \pi \zeta_i^{-1}(2)\left(8z_3 -1\right)S^2 + \mathcal{O}_{\epsilon}(S^{1+\epsilon}).
\end{equation*}
Note that $z_3 \approx 0.68644 >\frac{1}{2}$, so $8z_3-1$ is positive.

\section*{Acknowledgements}
I wish to thank Alan Haynes for his help and advice throughout this project, in particular with the proof of Theorem \ref{thm:latticepointcount}.  I am also grateful to Christopher Hughes and Sanju Velani for their comments and suggestions on the writing of this paper.

\newpage

\end{document}